\definecolor{LRed}{rgb}{1,.8,.8}
\newtheorem{theorem}{Theorem}[section]
\newtheorem{proposition}[theorem]{Proposition}
\newtheorem{definition}[theorem]{Definition}
\newtheorem{remark}{Remark}
\journal{Journal of Computational Physics}
\begin{document}

\begin{frontmatter}

\title{
A strategy to implement Dirichlet boundary conditions in the context of ADER finite volume schemes. One-dimensional conservation laws 
}

\author[GM]{Gino I. Montecinos\corref{cor1}\fnref{labelGM} }
\cortext[cor1]{Corresponding author}
\fntext[label2]{Email address: gmontecinos@dim.uchile.cl }
\address[GM]{        
Center for Mathematical Modeling (CMM) \\
 Universidad de Chile\\
  Santiago, Chile                
}

\begin{abstract}
ADER schemes are numerical methods, which can reach an arbitrary order of accuracy in both space and time. They are  based on a reconstruction procedure and the solution of generalized Riemann problems. However, for general boundary conditions, in particular of Dirichlet type, a lack of accuracy might occur if a suitable treatment of boundaries conditions is not properly carried out. In this work the treatment of Dirichlet boundary conditions for conservation laws in the context of ADER schemes, is concerned. The solution of generalized Riemann problems at the extremes of the computational domain, provides the correct influence of boundaries.  The reconstruction procedure, for data near to the boundaries, demands for information outside the computational domain, which is carried out in terms of ghost cells, which are provided by using the numerical solution of auxiliary problems.  These auxiliary problems are hyperbolic and they are constructed from the conservation laws and the information at boundaries, which may be   partially or totally known in terms of prescribed functions. The evolution of these  problems, unlike to the usual manner, is done in space rather than in time due to that these problems are named here, {\it reverse problems}. The methodology can be considered as a numerical counterpart of the inverse Lax-Wendroff procedure for filling ghost cells. However, the use of Taylor series expansions, as well as, Lax-Wendroff procedure, are avoided.  For the scalar case is shown that the present procedure preserve the accuracy of the scheme which is reinforced with some numerical results. Expected orders of accuracy for solving conservation laws by using  the proposed strategy at boundaries, are obtained up to fifth-order in both space and time.

\end{abstract}

\begin{keyword}

Inverse Lax-Wendroff procedure \sep Dirichlet boundary conditions \sep ADER finite volume schemes.

\end{keyword}

\end{frontmatter}

\section{Introduction}

In this article the  treatment of Dirichle boundary conditions in the context of  high-order finite volume schemes for conservation laws is concerned. To construct  schemes of high-order  of accuracy, the reconstruction procedure is a key, for which the so-called stencils are required. These are sets of data, which allows the construction of a polynomial representation of the solution within computational cells. It is in general a well described process for cells within a computational domain, however, for cells near to boundaries is not so. In boundary treatment, it is usual,  the use of ghost cells to provided the required information outside computational domains.  Cells extrapolation techniques are usually employed to obtain ghost cells.  Lagrangian extrapolation is often used.  However, it only provides good approximations for smooth solutions. For the presence of shocks, a WENO extrapolation procedure has been proposed in \cite{Zhang:2006a}.  However, the strategy proposed by Tan and collaborators  \cite{Tan:2010a,Tan:2011a,Tan:2012a},  Huang et al.  \cite{Huang:2008a} and Xiong et al. \cite{Xiong:2010a}, seems to be the most efficient in a wide range of solution regimes. In these references, ghost cells are computed by using Taylor series expansions in space around points in the boundary. To compose these Taylor expansions the governing equation is repeatedly used  to provide space-derivatives in terms of time-derivatives similarly to the Cauchy-Kowalewskaya or Lax-Wendroff procedure, this process is known as the {\it inverse  Lax-Wendroff procedure}.
%. As usual,  Lax-Wendroff procedure is used for evaluating time-derivatives in terms of spatial derivatives, due to that, this is known as the inverse Lax-Wendroff procedure.  
In these works, the inverse Lax-Wendroff  procedure was implemented for inflow, whereas for outflow boundaries, extrapolation techniques were concerned. This treatment has been implemented for linear problems, Vilar et al. \cite{Vilar:2014a} as well as non linear problems \cite{Filbet:2013a} to mention but a few, where  the global scheme, has resulted to be stable for any position of the boundaries.   To apply this methodology,  a manner to identify if the boundary  is an inflow or an outflow boundary,  is needed. A form to identify  and apply the type of boundary condition, is given in terms of Riemann problems. This technique is similar to that proposed by Berger et al. \cite{Berger:2003a,Berger:1990a}, where  irregular cells are incorporated near to boundaries in order to provide new cells which allow to construct  Riemann problems. In this way the inflow and outflow boundaries are automatically selected from the eigenstructure of the Riemann problem.  

In this paper, the treatment of hyperbolic conservation laws with dirichelt boundary conditions in the context of ADER finite volume schemes is concerned.  The ADER approach was first put forward by Toro et al. \cite{Toro:2001c,Toro:2002a,Titarev:2002a} for linear hyperbolic problems, see also \cite{Schwartzkopff:2002a}. Later, 
this method has also been successfully applied to solve several non-linear hyperbolic systems in Cartesian and unstructured meshes, \cite{Castro:2007a,Kaeser:2003a,Kaeser:2004a,Kaeser:2005a,Takakura:2002a,Titarev:2005a,Toro:2005a}.  ADER has also been  extended in the framework of discontinuous Galerkin finite element methods by Dumbser and collaborators, see for example \cite{Dumbser:2005a,Dumbser:2005b,Dumbser:2006a,Dumbser:2007a,Dumbser:2007c,Muller:2013b,Mueller:2014a}.  See also extensions of the ADER approach to the equations of magnetohydrodynamics by Balsara and collaborators  \cite{Balsara:2009a,Balsara:2013a} and extensions to advection-diffusion-reaction type equations \cite{Titarev:2003a,Toro:2009b,Hidalgo:2013a,Dumbser:2010a,Hidalgo:2011a,Zambra:2012a,Montecinos:2014a,Montecinos:2014d}. 
More recent results of the ADER approach include \cite{ Montecinos:2014b, Montecinos:2014c,Loubere:2014a,Dumbser:2013a,Dumbser:2014a,Dumbser:2014b,Borsche:2014a,Boscheri:2014b,Boscheri:2014c,Balsara:2016a,Montecinos:2016a}.  The ADER type methods are based on two main steps to compute numerical
solutions with arbitrary order of accuracy; i) a reconstruction
procedure and ii) the solution of a Generalized Riemann Problem (GRP).
The reconstruction procedure has to be non-linear in order to circumvent Godunov's theorem, \cite{Godunov:1959a,Toro:2009a}. This procedure at each time step requires a set of data, called stencil.  On the other hand, the solution of GRP's allows a correct wave propagation.  See \cite{Castro:2008a,Montecinos:2012a,Montecinos:2014c}  and chapters 19 and 20 of \cite{Toro:2009a} for a review of Generalized Riemann solvers.

%Regarding the boundary conditions, we known that a correct treatment of wave propagation at boundaries, is fundamental to correctly solve hyperbolic problems. 

Periodic boundary conditions, as well as, transmissive and reflective types, can easily be implemented in the context of ADER methods.
A straightforward manner to define generalised Riemann problems, as well as, to generate stencils for the
reconstruction procedure is available.  However, for general Dirichlet
boundary condition types, the reconstruction procedure requires
information outside the computational domain in order to complete
the required stencils.  Additionally, the wave propagation at boundaries must be correctly described. Therefore, in this paper  a strategy to provide the needed information in the reconstruction stage, as well as, the correct wave propagation at boundaries  through a generalised Riemann problem, are presented.  Ghost cells, which allow to create stencils, are obtained from the solution of auxiliary problems, which are constructed from the governing equations and the information at boundaries.  These auxiliary problems are hyperbolic and they may be solved by any finite volume scheme. This strategy can be considered as a numerical counter part to the inverse Lax-Wendroff procedure, but the use of Taylor series expansions and the Cauchy-Kowalewskaya procedure are avoided.
At this point we identify two types of problems, the first one,  is defined inside the computational domain and it is the problem of interest for us. This is called here, {\it interior problem}. The second type is given by the auxiliary problems which are defined outside the computational domain and allow to construct ghost cells. We remark that in opposite to the interior problem,  for auxiliary problems the evolution is carried out in space rather than in time, due to that these auxiliary problems are called {\it reverse problems}.  The methodology does not require the use of extrapolations in the usual manner and as we will show later, the methodology preserves the order of accuracy for numerical solutions of the interior problem.

The structure of the paper is as follows. In section \ref{setup}, the interior problem and the numerical scheme is discussed. In section \ref{reverse_problems} the treatment of Dirichelt boundary conditions is introduced. In section \ref{num_for_reverse} the numerical scheme for auxiliary problems is presented.  Some theoretical results concerning the scalar case are obtained in section \ref{theo_res}.  In section \ref{num_results}, numerical results are shown. Finally, conclusion are drawn in section \ref{summary}.

\section{The interior problem and the numerical scheme}\label{setup}
Let us consider hyperbolic conservation laws in the form 
\begin{eqnarray}\label{bc:eq-1}
\left.
  \begin{array}{cclc}

    \partial_t \mathbf{Q} + \partial_x \mathbf{F}(\mathbf{Q})  &=& \mathbf{0}\;, & x_L \leq x \leq x_R\;, 0\leq t \leq T\;, \\
    \mathbf{Q}(x,0)   &=& \mathbf{H}_0(x) \;, \\
%    \mathbf{Q}(x_L,t)   &=& \mathbf{G}_L(t) \;, \\
%    \mathbf{Q}(x_R,t)   &=& \mathbf{G}_R(t) \;. \\
  \end{array}
\right\}
\end{eqnarray}
here, $\mathbf{H}_0(x) $  is the initial condition, $\mathbf{Q}\in \mathbb{R}^m$ is the vector of unknowns,
$\mathbf{F}(\mathbf{Q})\in \mathbb{R}^m$ is the physical flux. On boundaries $x_L$ and $x_R$, inlet and outlet boundaries are observed. In the best case
prescribed functions $\mathbf{G}_L(t)\in \mathbb{R}^m$ and $\mathbf{G}_R(t)\in \mathbb{R}^m$ are provided, that means $\mathbf{Q}(x_L,t) = \mathbf{G}_L(t)$ and $\mathbf{Q}(x_R,t) = \mathbf{G}_R(t)$.  Of course, the prescribed functions influence the evolution
at a given time $t$,  according to the eigenstructure of the problem,
the number of effective boundary conditions depends on the sign of
eigenvalues.   
We remark that, in general cases, all components of $\mathbf{G}_{L}(t)$ or $\mathbf{G}_R(t)$ are not prescribed, so in the following sections we will deal the case in which component associated to outlet boundaries are not prescribed and approximate values will be provided.

Problem (\ref{bc:eq-1}) is defined inside the computational domain, due to that it is referred as {\it the interior problem}.
Let us derive the conventional finite volume formulation for (\ref{bc:eq-1}).
So, let us consider a partition of $[x_L,x_R]$ into $N_{int}$ sub intervals
$[x_{i-\frac{1}{2}},x_{i+\frac{1}{2}}]$ with uniform length $\Delta x =
x_{i+\frac{1}{2}}-x_{i-\frac{1}{2}}$,  $i=1,...,N_{int}$,  where 
$x_{-\frac{1}{2}}=x_L$ and $x_{N_{int}+\frac{1}{2}}= x_R$. Therefore, integrating the governing equation in the space-time interval $I_i^n=[x_{i-\frac{1}{2}},x_{i+\frac{1}{2}}]\times [t^n,t^{n+1}]$, we obtain the evolution formula
\begin{eqnarray}\label{bc:eq-1-1}
  \begin{array}{c}      
   \mathbf{Q}_i^{n+1} = \mathbf{Q}_i^{n} -\frac{\Delta t}{ \Delta
     x}\left[ \mathbf{F}_{i+\frac{1}{2}}-\mathbf{F}_{i-\frac{1}{2}}
   \right] \;, 
\end{array}
\end{eqnarray}
with
\begin{eqnarray}\label{bc:eq-1-2}
  \begin{array}{c}
  \mathbf{Q}_i^n = \frac{1}{\Delta x}\displaystyle \int_{x_{i-\frac{1}{2} } }^{x_{i+\frac{1}{2}}} \mathbf{Q}(x,t^n)dx\;,\; \\   
  \mathbf{F}_{i+\frac{1}{2} } =\frac{1}{\Delta t}\displaystyle
  \int_{t^n}^{t^{n+1}}\mathbf{F}( \mathbf{Q} (x_{i+\frac{1}{2} },t))dt    \;,   
 \end{array}
\end{eqnarray}
where $\mathbf{Q}_i^{n}$ is the cell average and 
$\mathbf{F}_{i+\frac{1}{2}}$ is the numerical flux.
A scheme is completely determined once the numerical flux is
defined. In this paper we are interested in the family of ADER
schemes, which can reach  an arbitrary order of accuracy in space and time.  
% In this paper  
% we implement the ADER-DET method proposed first time by
% Dumbser et al. \cite{Dumbser:2009a}. 
These are based on two steps; i) a reconstruction procedure and ii) the
solution of a Generalized Riemann Problem (GRP).

The reconstruction procedure of order $M$, provides a set of polynomials
$\mathbf{P}_i(x)$, which are constructed from sets of data, called
stencils. Thus, for example to construct the reconstruction
polynomial inside the cell $[x_{i-\frac{1}{2}},x_{i+\frac{1}{2}}]$,
the stencils are
\begin{eqnarray}
  \label{eq:stncil-0}
\begin{array}{ccc}
  \mathcal{S}_{-1,i}^{M} &=& \{
  \mathbf{Q}_{i-M}^{n},...,\mathbf{Q}_{i}^{n}\}\;,
\\
\\
  \mathcal{S}_{0,i}^{M} &=& \{
  \mathbf{Q}_{i-M}^{n},...,\mathbf{Q}_{i}^{n},...,\mathbf{Q}_{i+M}^{n}
  \}\;,
\\
\\
\mathcal{S}_{+1,i}^{M} &=& \{
\mathbf{Q}_{i}^{n},...,\mathbf{Q}_{i+M}^{n}
\}\;.
\end{array}
\end{eqnarray}
From each stencil  $\mathcal{S}_{l,i}^{M}$, with  $l=-1,0,+1$, a
polynomial of degree $M-1$ denoted by $\mathbf{P}_{i,l}(x)$, is
constructed. A polynomial reconstruction basis $\{\theta_k(\xi)\}_{k=1}^{M}$ defined in $[0,1]$, is
considered. So, each polynomial has the form
\begin{eqnarray}
  \label{poly:1}
  \mathbf{P}_{i,l}(x) = \sum_{k=1}^{M}\gamma_k^l
  \theta_k(\frac{x-x_{i-\frac{1}{2}}}{\Delta x})\;,
\end{eqnarray}
where $\gamma_k^l$ are found by solving
\begin{eqnarray}
  \label{poly:2}
\begin{array}{cc}
  \mathbf{Q}_{j}^{n} =\sum_{k=1}^{M} \gamma_k^l \int_{j}^{j+1}
  \theta(\xi)_kd\xi\;, & \mathbf{Q}_j^n\in S_{l}^{M}\;,
\end{array}
\end{eqnarray}
which is a linear problem for $\gamma_{k}^l$. For $\mathcal{S}_0^M$ we have more
elements than degrees of freedom, so an overestimated system has to be
solved, which is carried out through a constrained least-squares technique, see \cite{Dumbser:2007a}.  So, the reconstruction polynomial with support in
$[x_{i-\frac{1}{2}},x_{i+\frac{1}{2}}]$ is constructed as 
\begin{eqnarray}
  \label{poly:3}
  \mathbf{P}_i(x) =
  \omega_{-1}\mathbf{P}_{i,-1}(x)
  +\omega_{0}\mathbf{P}_{i,0}(x)
  +\omega_{+1}\mathbf{P}_{i,+1}(x)\;,
\end{eqnarray}
where $\omega_l,$ $i=-1,0,+1$ are weights, which are obtained as
follows. Firstly, we compute 
\begin{eqnarray}
  \label{poly_rev:4}
  \tilde{\omega}_{l} = \frac{\lambda_l}{(\epsilon +\sigma_l)^r}\;,
\end{eqnarray}
where $\lambda_{-1} = 1$, $\lambda_0 = 10^5$, $\lambda_{+1} = 1$, $\epsilon=10^{-14} $, $r=4$ and $\sigma_l$ is computed as
\begin{eqnarray}
  \label{poly_rev:5}
  \sigma_l =
  \sum_{r=1}^{M}\sum_{k=1}^{M} \gamma_k^l \int_{j}^{j+1}
  (\frac{d^r}{d\xi^r}\theta_k(\xi))^2d\xi\;.
\end{eqnarray}
Secondly, we normalize these values
\begin{eqnarray}
  \label{poly_rev:4}
  \omega_l= \frac{\tilde{\omega}_{l}}{\tilde{\omega}_{-1}+\tilde{\omega}_{0}+\tilde{\omega}_{+1}} \;,
\end{eqnarray}
with $l=-1,0,+1\;.$  See  \cite{Harten:1987a,Chou:2006a,Dumbser:2007a} for further details.

On the other hand,  in ADER methods the numerical flux in (\ref{bc:eq-1-2}), 
$\mathbf{F}_{i+\frac{1}{2}}$, is obtained from evaluating the integral
\begin{eqnarray}
  \label{det:1}
  \mathbf{F}_{i+\frac{1}{2}} = \int_0^1
  \mathbf{F}_h(\mathbf{q}_i(1,\tau),\mathbf{q}_{i+1}(0,\tau)) d\tau,
\end{eqnarray}
where $\mathbf{F}_{h}(\mathbf{q}_i,\mathbf{q}_{i+1}) $ denotes a
 Riemann solver,  which depends on two
arguments $\mathbf{q}_i(1,\tau)$ and $\mathbf{q}_{i+1}(0,\tau)$, which
are the high order extrapolated values of the data on the left and right side of
the interface $x_{i+\frac{1}{2}}$ at time $\tau$, respectively. In particular the Rusanov solver has been implemented in this work. Here,
the reconstruction polynomials are used to form the GRP's, as illustrated in \cite{Montecinos:2014b}, section 2.3.  In addition,  see chapters 19 and 20 of \cite{Toro:2009a} for a review of Generalized Riemann solvers and \cite{Castro:2008a,Montecinos:2012a} for a comparison of GRP solvers and references therein. 

Note that for cells near to
 boundaries, stencils should require values outside the
 computational domain.
For example, to construct the reconstruction polynomial for the first cell of the
computational domain, stencils $\mathcal{S}_{-1,1}^M$ and
$\mathcal{S}_{0,1}^M$ should contain $M$ values outside the left
boundary.  For some special boundary condition types, like periodic,
the stencils are filled with information which is available  inside the
computational domain. But for the general case we most provide these
values, in this case, these correspond to ghost cells.

The aim of this work is the treatment of boundary conditions, which is carried out in two steps. The first one, consists on the computation of the ghost cells to fill the stencils for the reconstruction procedure, the procedure is presented in detail in the following sections. The second issue regards the correct wave propagation coming from boundaries, it is carried out by using a commonly used strategy to estimate fluxes as boundaries as follows
\begin{eqnarray}
  \label{eq:boundaryflux:1}
  \begin{array}{ccc}
\displaystyle  \mathbf{F}_{-\frac{1}{2}} &=& \displaystyle \int_{0}^{1} \mathbf{F}_h(\mathbf{G}_L(t^n+\tau
  \Delta t),\mathbf{q}_1(0_+,\tau))d\tau\;, \\

\displaystyle  \mathbf{F}_{N_{int}+\frac{1}{2}}  &=& \displaystyle \int_{0}^{1} \mathbf{F}_h(\mathbf{q}_{N_{int}}(1_-,\tau),\mathbf{G}_R(t^n+\tau
  \Delta t))d\tau\;.
  \end{array} 
\end{eqnarray}
Here, $\mathbf{G}_L(t)$ and $\mathbf{G}_R(t)$ are functions defined on the interfaces, in which all their components are prescribed functions or they have to be approximated at each interval  $[t^{n}, t^{n+1} ]$.
In this way the inflow and outflow boundaries are automatically selected
by the Riemann solver.

\section{Reverse problems for  Dirichlet boundary conditions}\label{reverse_problems}

The aims here are twofold; first, provide the data at the interface to build Riemann problems, and second the computation of ghost cells, which will allow to fill stencils for the construction procedure.

\subsection{Computation of ghost cells}
Here we present a strategy to complete the stencils (\ref{eq:stncil-0})
given by 
 $\mathcal{S}_{l,i}^M$ with $l =-1,0,+1$ at the left and right extremes  $i=1,...,M$ and $i=N_{int}-M,...,N_{int}$, respectively. Let us assume that the physical flux is invertible, in the
sense that, there exists an operator $\mathbf{R}$ such that   
\begin{eqnarray}
  \label{eq:inveroper:1}
  \mathbf{R}(\mathbf{F}(\mathbf{Q}) )=\mathbf{Q}\;.
\end{eqnarray}
Therefore, we note that the governing equation (\ref{bc:eq-1}) can be
written as 
\begin{eqnarray}
  \label{eq:inveroper:2}
\partial_x \mathbf{U} + \partial_t  \mathbf{R}(\mathbf{U}) =\mathbf{0}\;,
\end{eqnarray}
with $\mathbf{U}=\mathbf{F}(\mathbf{Q})\;.$ Additionally, we assume that prescribed functions at both extremes of the domain, $\mathbf{G}_L(t)$ and $\mathbf{G}_R(t) $ are available at any time $t$.  This allows us to build the following problems
\begin{eqnarray}
  \label{eq:inveroper:3}
\left.
\begin{array}{ccc}
\partial_x \mathbf{U} + \partial_t  \mathbf{R}(\mathbf{U})
&=&\mathbf{0}\;,\; x \leq  x_L \;,\\
\mathbf{U}(x_L,t) &=& \mathbf{F}(\mathbf{G}_L(t))\;,\\ 
%\partial_t\mathbf{U}(x,t_{END})& =&  \mathbf{0} \;, \\
%\mathbf{U}(x,0) &=&  \mathbf{F}(\mathbf{H}_0(x)) \; \\
\end{array}
\right\}
\end{eqnarray}
and 
\begin{eqnarray}
  \label{eq:inveroper:4}
\left.
\begin{array}{ccc}
\partial_x \mathbf{U} + \partial_t  \mathbf{R}(\mathbf{U})
&=&\mathbf{0}\;,\; x \geq x_R \;,\\
\mathbf{U}(x_R,t) &=& \mathbf{F}(\mathbf{G}_R(t))\;.\\ 
%\partial_t\mathbf{U}(x,t_{END})& =&  \mathbf{0} \;, \\
%\mathbf{U}(x,0) &=&  \mathbf{F}(\mathbf{H}_0(x)) \;. \\
\end{array}
\right\}
\end{eqnarray}
Notice that unlike to (\ref{bc:eq-1}),  the evolution of these
problems is carried out for the $x$ variable instead of $t$. Due to
that,  systems (\ref{eq:inveroper:3}) and (\ref{eq:inveroper:4}), are
called here  {\it reverse problems}.   These problems allow to know approximations of the solution of interior problems  at any position $x$ outside computational domain at any time $t$.  Additionally, the involvement of the governing equation of interior problems, incorporates a physical meaning to approximations.  Note that these are hyperbolic problems, so a wide range of solution regimes can be expected, in this sense shock waves or discontinuous behaviours of solution can be captured by these reverse problems.  Notice that any finite
volume scheme of the form
\begin{eqnarray}
  \label{eq:inveroper:5}
  \mathbf{U}_{i+1}^n = \mathbf{U}_i^n-\frac{\delta x}{\delta t}[\mathbf{R}_{n+\frac{1}{2}}-\mathbf{R}_{n-\frac{1}{2}}]\;,
\end{eqnarray}
with 
\begin{eqnarray}
\label{eq:inveroper:6}
  \begin{array}{cc}
  \mathbf{U}_i^n = \frac{1}{\delta t}\displaystyle \int_{t^{n-\frac{1}{2}}} ^{t^{n+\frac{1}{2}}} \mathbf{U}(x_i,t)dt\;,\; &   
  \mathbf{R}_{n+\frac{1}{2} } =\frac{1}{\delta  x}\displaystyle
  \int_{x_{i}}^{x_{i+1}}\mathbf{R}( \mathbf{U} (x,t^{n+\frac{1}{2}}))dx    \;,  
 \end{array}
\end{eqnarray}
can be employed to solve these reverse problems.  Of course, methods for interior problems may be different of those for reverse problems, even order of accuracy of both type of schemes can be different. 
In section \ref{num_for_reverse}, a simple second order method for reverse problems, is presented.

%The domain for reverse problems are divided into uniform intervals
%$[t^{n-\frac{1}{2}}, t^{n+\frac{1}{2} } ]$ of
%length $\delta t$. The size of $\delta x= x_{i+1}-x_i$ is obtained from a
%conventional CFL condition involving $\delta t$ and the eigenvalues of
%the Jacobian of $\mathbf{R}(\mathbf{U})\;.$  Notice that there are several applications in which the Jacobian matrix for the interior problem is not invertible. In such a case an inverse function $R$ can not be straightforward obtained.  In the following section this issue is regarded. 

\begin{remark}
Notice that in this section, two assumptions have been done;  the physical flux $\mathbf{F}(\mathbf{Q})$ is assumed to be invertible; $\mathbf{G}_L(t)$ and $\mathbf{G}_R(t)$ are assumed to be prescribed. However, in general cases these assumptions are not longer valid. In the following sections these issues are concerned.

The  strategy could be considered as a numerical counterpart
of the inverse Lax-Wendroff procedure, but, the use of Taylor expansions, the Cauchy-Kowaleski procedure and extrapolations like those in \cite{Tan:2010a}, are avoided.  

%We propose  new problems at boundaries and their solutions are used to provide ghost cells.  For that,  the acknowledgement of $\mathbf{G}_{L}(t)$ and  $\mathbf{G}_{R}(t)$ is fundamental.  However, in several applications this information is not available. In the following section is presented a simple method to provide  information at boundaries in such cases.
\end{remark}

\subsection{A strategy to compute inverse functions $\mathbf{R}$ for the physical flux $\mathbf{F}$}\label{section-non-invertible-F}

Let us consider the physical flux $\mathbf{F}$  and a given $\mathbf{U}$, from the relationship between $\mathbf{F}$ and $\mathbf{U}$, we can find $\mathbf{Q}$ such that
\begin{eqnarray}
\begin{array}{c}
\mathbf{U} = \mathbf{F}(\mathbf{Q})\;.
\end{array}
\end{eqnarray}
This can be done by using a fixed point procedure

\begin{eqnarray}
\begin{array}{c}
\mathfrak{H}(\mathbf{Q})=\mathbf{U} - \mathbf{F}(\mathbf{Q})\;.
\end{array}
\end{eqnarray}
The Jacobian of $ \mathfrak{H}(\mathbf{Q})$ is given by
\begin{eqnarray}
\begin{array}{c}
\frac{\partial \mathfrak{H} }{\partial \mathbf{Q}} =- \mathbf{A}(\mathbf{Q})\;.
\end{array}
\end{eqnarray}
Then we can generate the iteration process
\begin{eqnarray}
\label{RP:eq-1}
\mathbf{Q}^{l+1} = \mathbf{Q}^{l}- \delta^{l}\;, 
\end{eqnarray}
where $\delta^{l}$ solves
\begin{eqnarray}
\label{increment:eq-1}
\delta^{l} = min_{\delta}||\mathbf{A}(\mathbf{Q}^{l}) \delta - \mathfrak{H}(\mathbf{Q}^l)||\;,
\end{eqnarray}
here, $l$ is an iteration index. Notice that $\mathbf{A}$ not require to be invertible. However, if $\mathbf{A} $ is invertible, then $\mathbf{\delta}$ is uniquely determined by (\ref{increment:eq-1}) and in addition, we can assume that (\ref{RP:eq-1}) converges at least locally to some $\mathbf{Q}^*$, that means,
$\mathbf{Q}^{l}\rightarrow \mathbf{
Q}^*$ then
\begin{eqnarray}
\begin{array}{c}
\mathbf{U} = \mathbf{F}(\mathbf{Q^*})\;
\end{array}
\end{eqnarray}
and thus
\begin{eqnarray}
\begin{array}{c}
\mathbf{R}(\mathbf{U}) = \mathbf{Q^*}\;.
\end{array}
\end{eqnarray}
We remark that this procedure can be carried out only if $\mathbf{R}$ cannot be provided analytically.

\subsection{Treatment of boundary conditions when only the inflow boundary is prescribed}\label{inflow:BC}
Here, the aim is to construct $\mathbf{G}_L(t) $ and $\mathbf{G}_R(t)$  in the case in which only the inflow boundary is prescribed.  This case has been well described in \cite{Tan:2010a} and the strategy described in section 2.4, can be applied to build $\mathbf{G}_L(t)$ and $\mathbf{G}_R(t)$. Alternatively, in this section we propose a simpler approach which avoid the solution of algebraic equations.  

We note that inlet and outlet boundaries are characterized by the sign of eigenvalues of the Jacobian matrix of $\mathbf{F}(\mathbf{Q})$.  By following the strategy in \cite{Tan:2010a} we express variables at boundaries  in terms of local characteristic variables.  To  do that, we are going to assume that Jacobian matrix $\mathbf{A}:= \partial \mathbf{F}(\mathbf{Q})/\partial \mathbf{Q}$ has a decomposition 
\begin{eqnarray}
\mathbf{A} = \mathbf{L}^{-1}\mathbf{\Lambda}\mathbf{L}\;,
\end{eqnarray}
with $\mathbf{\Lambda} =diag(\lambda_j)$, $j=1,...,m$. In addition let us assume that $\lambda_i<\lambda_{i+1}$. Without loss of generality,  let us focus on the left boundary so we are going to assume that there exists $m^*\leq m$ such that $\lambda_j(\mathbf{G}_L(t^n))<0$, $j=1,...,m^{*}$, whereas, $\lambda_j(\mathbf{G}_L(t^n)) > 0$, $j=m^{*}+1,...,m$. Then a local characteristic transformation can be carried out at boundaries.  So a local decomposition can be obtained as follows 
$$
\mathbf{W}_L(t) =\mathbf{L}(\mathbf{G}_{L}(t)) \mathbf{G}_{L}(t) \;,
$$
it also implies that eigenvalues can be expressed in terms of local characteristic variables. Similarly, once characteristic variables are available we can recover conserved variables as follows
\begin{eqnarray}
\label{local-cons:eq-1}
 \mathbf{G}_{L}(t)  =\mathbf{L}^{-1}(\mathbf{W}_L(t)) \mathbf{W}_L(t)  \;.
\end{eqnarray}
Since $\mathbf{G}_L(t)$ and $\mathbf{G}_R(t)$ are not always prescribed, we replace $\mathbf{G}_L(t_n)$ by $\mathbf{Q}_1^{n}$ and $\mathbf{G}_R(t_n)$ by $\mathbf{Q}_{N_{int}}^{n}$. Hence,  inflow boundaries on the left correspond to $\lambda_j(\mathbf{Q}_1^n) > 0$ and  information at boundary is required on the corresponding characteristic variables, that means $\mathbf{W}_{L,j}(t)$ needs to be defined for $j>m^*$ because this information is propagated into the computational domain.  However, boundary information associated to  $\lambda_j<0$ is not required, because the corresponding information is propagated outside the domain, and it does not influence the numerical solution of interior problems. However, we note that this information is still required for the reconstruction procedure.

In this section we provide a simple interpolation to fill information at boundaries, due to the lack of a prescribed function for  the inflow as well as for outflow boundaries.   The strategy simply is based on the use of the record of numerical solutions at interior cell near to  boundaries. At the time level $t^n$, we  carry out the following first order interpolation in time
\begin{eqnarray}
\label{interp:inverseproblems:eq-1}
\mathbf{T}(t) = \mathbf{Q}_{1}^{n-1}+\frac{t-t^{n-1}}{\Delta t} (\mathbf{Q}_{1}^{n}-\mathbf{Q}_{1}^{n-1})  \;.
\end{eqnarray}
We start the procedure  at $t^{0}$, with $\mathbf{Q}_{1}^{-1} := 3\mathbf{Q}_{1}^{0}-3\mathbf{Q}_{2}^{0}+\mathbf{Q}_{3}^{0}$, which is a third order extrapolation.  Then,  we transform $\mathbf{T}(t)$ into characteristic variables in a local sense
\begin{eqnarray}
\mathbf{\hat{T}}(t) = \mathbf{L}( \mathbf{Q}_1^{n}) \mathbf{T}(t)\;,
\end{eqnarray}
then, for component $j$ where information is not provided, we assign $\mathbf{W}_{L,j}(t)  = \mathbf{\hat{T}}_{j}(t)$. This is normal in the case of outflow, however, it can be employed for inflow boundaries when information is missed. Then the data on the left $\mathbf{G}_{L}(t)$ is completely defined by following (\ref{local-cons:eq-1}) but with $\mathbf{W}_L(t_n)$ replaced by $\mathbf{W}_1^n$, the primitive variable associated to $\mathbf{Q}_1^n$,  and thus reverse problems can be solved.  The same procedure on the right boundary condition can be carried out, and thus  $\mathbf{G}_R(t)$ can be obtained.

\section{Numerical solution of reverse problems and ghost cell computations}\label{num_for_reverse}

In this section we propose  a  second order scheme to solve the reverse problems on the right, in a small domain centred in $t\in [t^n,t^{n+1}]$, the same approach will be valid for reverse problems on the left.   This approach is based on the construction of a computational domain based on some few small cells around $t$ such that the second order of accuracy on these small intervals is comparable with the high-order accuracy of the global scheme on coarse meshes.   Let us construct a computation domain of $2\bar{M}-1$ cells. We start by considering $\bar{M}$ barycentre values 
\begin{eqnarray}
\begin{array}{c}
\tau_{n+\bar{M}}=t + n \delta t \;,\\
\end{array}
\end{eqnarray}
with $n=-(\bar{M}-1),...,\bar{M}-1$ and $\delta t = L \Delta t/(2\bar{M}-1)$, where $L$ is a constant length.  Thus we define $2\bar{M}-1$  cells $[t^{n-\frac{1}{2}},t^{n+\frac{1}{2}}]$ with $t^{n+\frac{1}{2}}=\tau_n+\frac{\delta t}{2}$. By construction the central cell contains $t$, whereas, extremes of this domain are given by $t_L = t^{-(\bar{M}-1)-\frac{1}{2}}$ and $t_R = t^{(\bar{M}-1)+\frac{1}{2}}$.  We will use a MUSCL type scheme for marching in space up to $x>x_R$ in $N$ iterations, which is carried out by considering 
\begin{eqnarray}
\begin{array}{c}
\delta x= \frac{(x-x_R) }{N}\;,
\end{array}
\end{eqnarray}
such that $ x =x_R+N \delta x$. The marching in space is achieved by using a one-step finite volume formula, as follows 
\begin{eqnarray}
\begin{array}{c}
\displaystyle \mathbf{U}_{i+1}^n = \displaystyle \mathbf{U}_{i}^n-\frac{\delta x}{\delta t} \left[\mathbf{R}_{n+\frac{1}{2}} -\mathbf{R}_{n-\frac{1}{2}}\right]\;,
\end{array}
\end{eqnarray}
with $i=1,...,N$  and $n=-(\bar{M}-1),...,\bar{M}-1$ where 
\begin{eqnarray}
\begin{array}{c}
\mathbf{U}_0^n = \frac{1}{\Delta t} \displaystyle \int_{t^{n-\frac{1}{2}}}^{t^{n+\frac{1}{2}}} \mathbf{F}( \mathbf{G}_R(t) ) dt\;.
\end{array}
\end{eqnarray}
The numerical flux is computed by using a MUSCL type scheme which is explained as follows.  We start by considering the interpolation polynomial 
\begin{eqnarray}
\label{recons:reverse-problem:eq-1}
\mathbf{P}^n(t)=\mathbf{U}_i^n  +\frac{(t-\tau_{n})}{\delta t}\mathbf{\Delta_n }  \;,
\end{eqnarray}
where the slope $\mathbf{\Delta_n}$ is obtained through the MINMOD limiter, which  in a component wise, has the form
\begin{eqnarray}
\begin{array}{c}
\mathbf{\Delta_{n}}_{j} =\left\{
\begin{array}{c}
0 \;, if \;, \mathbf{D}_j^{+,n} \mathbf{D}_j^{-,n}  \leq 0 \;,\\
\mathbf{D}_j^{-,n}\;,if\;, |\mathbf{D}_j^{-,n}| < |\mathbf{D}_j^{+,n}|\;,\\
\mathbf{D}_j^{+,n}\;,if\;, |\mathbf{D}_j^{+,n}| < |\mathbf{D}_j^{-,n}|\;,\\
\end{array}
\right. 
\end{array}
\end{eqnarray}
$j=1,...,m$, where $\mathbf{D}^{-,n}:=\mathbf{U}_i^{n}-\mathbf{U}_i^{n-1}$ and $\mathbf{D}^{+,n}:=\mathbf{U}_i^{n+1}-\mathbf{U}_i^{n}$.  Then, inside cell $[t^{n-\frac{1}{2}},t^{n+\frac{1}{2}}]$, extrapolations of the data at both interfaces are obtained as follows
\begin{eqnarray}
\begin{array}{c}
\mathbf{\bar{U}}_L :=\mathbf{P}^n(t^{n-\frac{1}{2}})=\mathbf{U}_i^{n} -\frac{1}{2}\mathbf{\Delta_i}   \;, \\
\mathbf{\bar{U}}_R :=\mathbf{P}^n(t^{n+\frac{1}{2}})=\mathbf{U}_i^{n} +\frac{1}{2}\mathbf{\Delta_i}   \;.
\end{array}
\end{eqnarray}
These extrapolated values are evolved as 
\begin{eqnarray}
\begin{array}{c}

\displaystyle \mathbf{\bar{U}}_L^n = \mathbf{\bar{U}}_L +\frac{\delta x}{2\delta t }(\mathbf{R}(\mathbf{\bar{U}}_L)-\mathbf{R}(\mathbf{\bar{U}}_{R}) )\;, 
\\
\\
\displaystyle \mathbf{\bar{U}}_R^n = \mathbf{\bar{U}}_R +\frac{\delta x}{2\delta t }(\mathbf{R}(\mathbf{\bar{U}}_L)-\mathbf{R}(\mathbf{\bar{U}}_{R}) )\;. 
\end{array}
\end{eqnarray}
Then, to compute the numerical flux values, $\displaystyle \mathbf{\bar{U}}_L^n$ and $\displaystyle \mathbf{\bar{U}}_R^n$ are interacted by using an approximate Riemann solver. Particularly
\begin{eqnarray}
\begin{array}{c}
\mathbf{R}_{n+\frac{1}{2}} =\displaystyle  \frac{1}{2}\left(\mathbf{R}(\displaystyle \mathbf{\bar{U}}_{R}^{n})+\mathbf{R}(\displaystyle \mathbf{\bar{U}}_{L}^{n})  \right)-\frac{\mu_{n+\frac{1}{2}} }{2}\left( \mathbf{\bar{U}}_{R}^{n}-\mathbf{\bar{U}}_{L}^{n} \right) \;,
\end{array}
\end{eqnarray}
that means, $\mathbf{R}_{n+\frac{1}{2}}$ is computed by using the Rusanov flux, \cite{Rusanov:1961a}.  Here $\mu_{n+\frac{1}{2}}$ is obtained as
\begin{eqnarray}
\begin{array}{c}
\mu_{n+\frac{1}{2}} = \displaystyle max\{ \mu (\mathbf{\bar{U}}_{L}^{n}) , \mu (\mathbf{\bar{U}}_{R}^{n})  \}\;,
\end{array}
\end{eqnarray}
where $\mu(\mathbf{U})=max_{j=1,...,m}|\mu_j(\mathbf{U}) |$,  with $\mu_j(\mathbf{U}) $ an eigenvalue of the Jacobian of $\mathbf{R}(\mathbf{U})$. On the extremes $t_L$ and $t_R$ we impose boundary conditions, by computing ghost cells through a third order extrapolation as follows 

\begin{eqnarray}
\label{extapolated:BC:reverse}
\begin{array}{ccc}

\mathbf{U}_i^{t_L-1}   &=& 3\mathbf{U}_i^{1}-3\mathbf{U}_i^{2}+\mathbf{U}_i^{3}\;,\\ 
\\
\mathbf{U}_i^{t_R+1} &=& 3\mathbf{U}_i^{\bar{M}}-3\mathbf{U}_i^{\bar{M}-1}+ \mathbf{U}_i^{\bar{M}-2}\;.\\ 

\end{array}
\end{eqnarray}
It allows to compute fluxes at boundaries as follows
\begin{eqnarray}
\begin{array}{ccc}

\mathbf{R}_{-(\bar{M}-1)-\frac{1}{2}} &=& \displaystyle  \frac{1}{2}\left(\mathbf{R}(\displaystyle \mathbf{U}_{i}^{t_L-1})+\mathbf{R}(\displaystyle \mathbf{U}_{i}^{-(\bar{M}-1)})  \right)-\frac{\lambda_{-(\bar{M}-1)+\frac{1}{2}} }{2}\left( \mathbf{U}_{i}^{-(\bar{M}-1)}-\mathbf{U}_{i}^{t_L-1} \right) \;, \\

\\

\mathbf{R}_{(\bar{M}-1)+\frac{1}{2}} &=& \displaystyle  \frac{1}{2}\left(\mathbf{R}(\displaystyle \mathbf{U}_{i}^{t_R+1})+\mathbf{R}(\displaystyle \mathbf{U}_{i}^{(\bar{M}-1)})  \right)-\frac{\lambda_{\bar{M}-1+\frac{1}{2}} }{2}\left( \mathbf{U}_{i}^{t_R+1} - 
\mathbf{U}_{i}^{(\bar{M}-1)} \right) \;. 

\end{array}
\end{eqnarray}
%
%
%notice that boundaries incorporate information which can penalize the accuracy in the cells immediately inside the cells.  This lack of accuracy starts to propagate to the other cells as the solver evolves.  The number of iterations avoid the boundary information reaches cell center containing $[t-\frac{\delta t}{2},t+\frac{\delta t}{2}]$. 
Therefore  after $N$ iterations, an approximation at $x>x_R$ is given by
\begin{eqnarray}
\label{reconstruction:eq-1:reverse}
\begin{array}{c}
\mathbf{Q}(x,t) \approx \mathbf{R}(\mathbf{U}_{N}^0) \;,
%\mathbf{Q}_{N_{int}+j}^n = \mathbf{R}(\mathbf{U}_N^0) \;.
\end{array}
\end{eqnarray}
so, ghost cells for reconstruction procedure on the right extreme of $[x_L,x_R]$, are recovered as
\begin{eqnarray}
\label{eq:inveroper_revf0:7}
 \mathbf{Q}_{N_{int}+j}^n  = \frac{1}{\Delta x}
 \int_{x_{j+N_{int}-\frac{1}{2}}}^{x_{j+N_{int}+\frac{1}{2}}} \mathbf{Q}(x,t^{n} ) dx\;,
\end{eqnarray}
with $j=1,...,M$.  The same procedure applies for ghost cells on the left of the domain $[x_L,x_R]$.

\begin{remark}
To evaluate the integral (\ref{eq:inveroper_revf0:7}) we use a quadrature rule of three points.
\end{remark}

Figure \ref{fig:local_evolution} shows the sketch for $\bar{M}=N=3$.  In this case the solution is found in three steps. In the first iteration the boundary influences the first and last cell, then in the second iteration, the first and last cells  propagate their information to the neighbours inside the computational domain and thus this process continues in the third step where the process finishes and the influences of boundaries do not reach the center cell.

Concerning the stability of numerical schemes, we know that for interior problems the Courant-Friedrich-Levita (CFL) condition imposes
\begin{eqnarray}
\begin{array}{c}
\displaystyle \frac{ \Delta t}{ \Delta x}  = \displaystyle \frac{c}{ \displaystyle  \overline{\lambda}}\;,
\end{array}
\end{eqnarray}
with $c\leq 1 $ the CFL coefficient and  $ \overline{\lambda} = max_{i}(\lambda(\mathbf{Q}_i^n)) $ for $i=1,...,N_{int}$, where $\lambda(\mathbf{Q})=max_{j=1,...,m} |\lambda_j(\mathbf{Q})|$ with $\lambda_j(\mathbf{Q})$  eigenvalues of the Jacobian of $\mathbf{F}(\mathbf{Q})$ with respect to $\mathbf{Q}$.  Similarly  for reverse problems, stability in terms of the CFL condition imposes
\begin{eqnarray}
\begin{array}{c}
\displaystyle \frac{ \delta x}{ \delta t}  = \displaystyle \frac{c}{\displaystyle \overline{\mu}}\;,
\end{array}
\end{eqnarray}
with $\overline{\mu} = max_{n}(\mu(\mathbf{U}_i^n)) $, $n=1,...,2\bar{M}-1$ and $\mu(\mathbf{U}) =max_{j=1,...,m} |\mu_{j}(\mathbf{U}) |$, with $\mu_{j}(\mathbf{U})$  eigenvalues of the Jacobian of $\mathbf{R}(\mathbf{U})$.

\begin{proposition}\label{proposition:space-time:discretization}
Let $M$ be the order of accuracy of schemes for interior problems. A necessarily condition to obtain stable numerical schemes for reverse problems is that 
$$\eta :=\frac{\bar{M}}{N L}\leq c^2 \;, $$
with $L\leq M$.  
\end{proposition}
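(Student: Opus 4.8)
The plan is to read the statement as a \emph{necessary} condition extracted from the two stability (CFL) constraints displayed just before the proposition, namely $\frac{\Delta t}{\Delta x}=\frac{c}{\bar\lambda}$ for the interior scheme and $\frac{\delta x}{\delta t}=\frac{c}{\bar\mu}$ for the reverse scheme. Since both must hold simultaneously for the coupled procedure to be stable, I would form their product and then eliminate $\delta t$ and $\delta x$ using the discretization relations that define the reverse mesh. The factor $c^{2}$ on the right-hand side is the obvious signature of multiplying two conditions each carrying one factor of $c$, so this is the natural route.

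Concretely, first I would rewrite the two CFL identities as $\frac{\Delta t}{\Delta x}\,\bar\lambda=c$ and $\frac{\delta x}{\delta t}\,\bar\mu=c$ and multiply them to obtain
\begin{equation*}
\frac{\Delta t}{\delta t}\,\frac{\delta x}{\Delta x}\,\bar\lambda\,\bar\mu=c^{2}.
\end{equation*}
Next I would substitute $\delta t=\frac{L\Delta t}{2\bar M-1}$, which gives $\frac{\Delta t}{\delta t}=\frac{2\bar M-1}{L}$, together with the relation fixing the spatial marching, $N\,\delta x=\Delta x$, i.e.\ $\frac{\delta x}{\Delta x}=\frac{1}{N}$; the latter expresses that the $N$ reverse steps sweep one interior cell width, which is the reference distance over which ghost data must be produced. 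These substitutions collapse the product to $\frac{2\bar M-1}{NL}\,\bar\lambda\,\bar\mu=c^{2}$.

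To finish I would use that $\mathbf{R}$ is the inverse flux, so in the scalar setting the wave speeds are reciprocals, $\mu_j=1/\lambda_j$; hence $\bar\lambda\,\bar\mu=\max_j|\lambda_j|/\min_j|\lambda_j|\ge 1$. This yields $\frac{2\bar M-1}{NL}=\frac{c^{2}}{\bar\lambda\,\bar\mu}\le c^{2}$, and since $\bar M\le 2\bar M-1$ for $\bar M\ge 1$ I conclude $\eta=\frac{\bar M}{NL}\le\frac{2\bar M-1}{NL}\le c^{2}$. The side constraint $L\le M$ enters only as the design requirement that the temporal window $L\Delta t$ of the reverse problem not exceed the $M$-cell reach of the interior reconstruction stencil, so it is carried along rather than consumed in the final chain of inequalities.

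I expect the main obstacle to be justifying the two auxiliary relations rather than the algebra. First, the identification $N\,\delta x=\Delta x$ must be argued from how far the marching has to reach to populate the stencils near the boundary; if instead one insists on reaching the $M$-th ghost cell, the reference distance changes and the constant multiplying $\bar M$ shifts, which is precisely why the clean statement is phrased as an inequality that absorbs such factors. Second, the step $\bar\lambda\,\bar\mu\ge 1$ hides a min/max subtlety: $\bar\mu$ is built from the largest $|\mu_j|=1/\min_j|\lambda_j|$ while $\bar\lambda$ is the largest $|\lambda_j|$, so the bound is sharp only for a single wave speed (linear flux) and must remain an inequality in the nonlinear case. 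Handling these two points while keeping the argument a \emph{necessary} (not sufficient) condition is where the real work lies.
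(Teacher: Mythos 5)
Your proof is correct and follows essentially the same route as the paper's: combine the interior and reverse CFL relations to get $\frac{c^2}{\bar\lambda\bar\mu}=\frac{K}{L}\frac{2\bar M-1}{N}$, use that the eigenvalues of $\partial\mathbf{R}/\partial\mathbf{U}$ are reciprocals of those of $\mathbf{A}=\partial\mathbf{F}/\partial\mathbf{Q}$ (all assumed nonzero) to conclude $\bar\lambda\bar\mu\ge 1$, and finish with $2\bar M-1\ge\bar M$. The only cosmetic difference is that the paper writes $\delta x=K\Delta x/N$ with a general distance factor $K\ge 1$ absorbed into the final inequality, whereas you fix $K=1$ --- precisely the point you flagged yourself.
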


\begin{proof}

Now, we observe that  $\delta t = \frac{L \Delta t}{2\bar{M}-1} $ and $\delta x = \frac{K \Delta x}{ N}$, with $L\leq M$ and $K$ is the distance of $x$ with respect to $[x_L,x_R]$. Therefore
\begin{eqnarray}
\begin{array}{ccc}
\frac{c}{\overline{\mu}} &=& \frac{\delta x}{ \delta t} = \frac{ \Delta x }{\Delta t}\frac{  K(2\bar{M}-1)}{ L N }\;,\\
\\
\frac{c}{\overline{\mu}} &=& \frac{\overline{\lambda}}{ c} \frac{ K}{L}\frac{ 2\bar{M}-1}{N}\;,\\
\end{array}
\end{eqnarray}
after some manipulations
\begin{eqnarray}
\begin{array}{c}
\frac{c^2}{\overline{\mu} \overline{\lambda} } = ( \frac{ K}{L})\frac{ 2\bar{M}-1}{N}\;.\\
\end{array}
\end{eqnarray}
On the other hand, let us assume that all eigenvalues $\mathbf{A}(\mathbf{Q}) = \partial \mathbf{F}(\mathbf{Q})/\partial \mathbf{Q}$ are distinct from zero. Then
\begin{eqnarray}
\partial_t \mathbf{Q}+\mathbf{A}(\mathbf{Q}) \partial_x \mathbf{Q} = \mathbf{0}\;
\end{eqnarray}
and
\begin{eqnarray}
\partial_x \mathbf{Q}+\mathbf{A}(\mathbf{Q})^{-1} \partial_t \mathbf{Q} = \mathbf{0}\;,
\end{eqnarray}
thus we have 
\begin{eqnarray}
\begin{array}{c}
\mathbf{A}(\mathbf{Q})^{-1}  = \partial \mathbf{R}(\mathbf{U})/\partial \mathbf{U}\;. 
\end{array}
\end{eqnarray}
Now, let us define $ \underline{\lambda} =min_j|\lambda_j(\mathbf{Q})|>0$.  Then
\begin{eqnarray}
\begin{array}{c}
1\leq \frac{\overline{\lambda}}{\underline{\lambda}}= \overline{\lambda}\overline{\mu}\;.
\end{array}
\end{eqnarray}
Therefore
%%
%\begin{eqnarray}
%\begin{array}{c}
%c^2  = \overline{\mu} \overline{\lambda}( \frac{ K}{L})\frac{ 2\bar{M}-1}{N}\geq \frac{ \bar{M}}{N L}\;,\\
%\end{array}
%\end{eqnarray}
%%
%
\begin{eqnarray}
\begin{array}{c}
c^2\geq \frac{c^2}{\overline{\mu} \overline{\lambda} } = ( \frac{ K}{L})\frac{ 2\bar{M}-1}{N}\geq \frac{ \bar{M}}{N L}\;.\\
\end{array}
\end{eqnarray}
Thus the result holds.
\end{proof}

\begin{remark}
Reverse problems are hyperbolic conservation laws. Hence, any high-order numerical scheme may be applied. However, high-order schemes reduce dramatically the efficiency of global solvers.  As we will see later, in proposition \ref{accuracy} and  numerical results in section \ref{num_results}, for numerical implementations second order of accuracy for reverse problems should be enough to get the accuracy for interior problems.  
\end{remark}

\begin{figure}
\begin{center}
\includegraphics[scale=0.8]{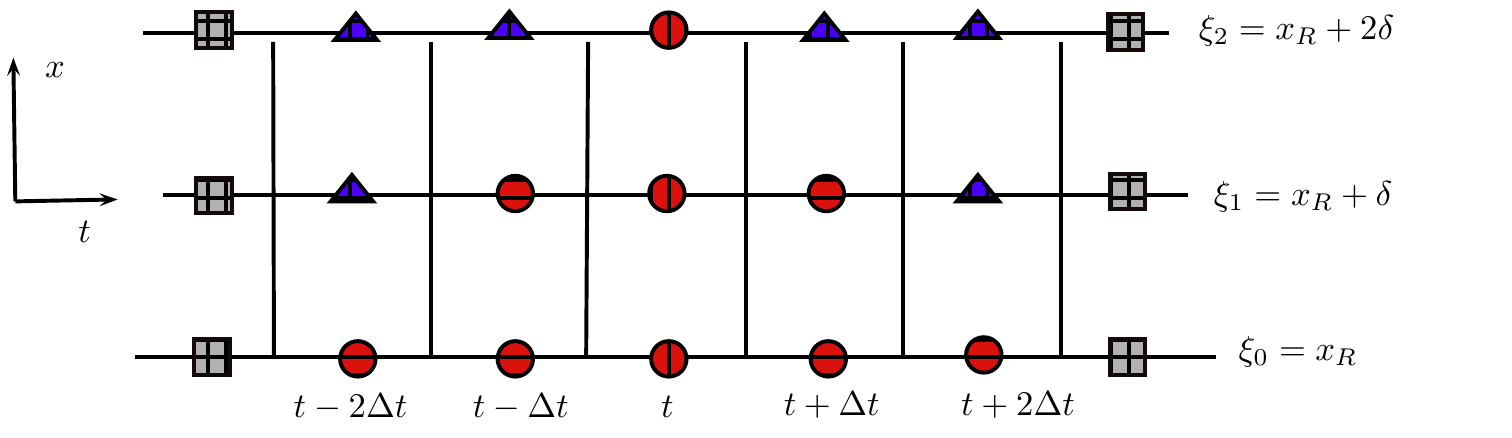}
\end{center}
\caption{Sketch of local evolution around $t$ for three cells $\bar{M}=N=3$. Squares reperesent the ghost cells, circles and triangles are interior points evolved with a second order scheme. Solution is reached in $N=3$ iterations. Triangles are interior points which at each evolution, are influencied for the information coming from boundaries.  Circles are the interior cells which only depend on the interior point.  }\label{fig:local_evolution}
\end{figure}

\section{Some theoretical results}\label{theo_res}
In this section we are going to present some theoretical results for the scalar case
\begin{eqnarray}
\label{eq:1}
\left.
\begin{array}{c}

\partial_t q(x,t) + \partial_x f(q(x,t)) = 0 \;, x\in [x_L,x_R]\;,\\

q(x,0)    = h(x)\;, \\
q(x_L,t)  = g_L(t)\;, \\
q(x_R,t)  = g_R(t)\;. \\
\end{array}
\right\}
\end{eqnarray}
The reverse problems (\ref{eq:inveroper:3}) and (\ref{eq:inveroper:4}) associated to (\ref{eq:1}) are formed by the governing equation
\begin{eqnarray}
\label{eq:2}
\partial_x u(x,t)+\partial_t R(u(x,t)) = 0 \;, 
\end{eqnarray}
where $R(u) = q $.  Here, we are going to prove that the present methodology preserves the theoretical order of accuracy for interior problems and for the case in which the physical flux contains an inverse function,  the solution of the reverse  problem can be tracked from the interior problem through the characteristic curves.  In what follows, we will say {\it reverse boundary condition} to indicate that Dirichelt boundary conditions are included through the solution of a GRP at the extremes of the computational domain and the solution of reverse problems is used to provide the ghost cells in order to fill the stencils and thus to carry out the reconstruction procedure.
  
To start,  let us assume that  problem associated to (\ref{eq:1}) has the exact solution $\bar{q}(x,t)$ and problem associated to (\ref{eq:2}) has the exact solution $\bar{u}(x,t)$.  Additionally, we are going to assume that the numerical solutions for the interior problem  are obtained with a numerical scheme of accuracy  $r$ in space and time.  Let us assume that the scheme for solving  (\ref{eq:1}) can be written as 
\begin{eqnarray}
\label{eq:3}
q_i^{n+1} = \sum_{j=-k_L}^{k_R} b_j(q_{i+j-k_L}^n,..,q_{i+j}^n,...,q_{i+j+k_R}^n)q_{i+j}^n\;,
\end{eqnarray}
where $b_j$ is a function of the data. These functions can be non-linear. 
Then if we adopt the definition of accuracy as in \cite{Ben-Artzi:2003a}, (definition 2.21).  The scheme applied to the exact solution, at least for cells inside the computational domain satisfies
\begin{eqnarray}
\label{eq:4}
\begin{array}{c}
\bar{q}(x_i,t^{n+1}) - \sum_{j=-k_L}^{k_R} b_j(\bar{q}(x_{i+j-k_L},t^n),...,\bar{q}(x_{i+j+k_R},t^n))\bar{q}(x_{i+j},t^n) \\= O(\Delta t^{r+1})\;. 
\\
\end{array}
\end{eqnarray}
As the time step $\Delta t $ and the mesh size $\Delta x$ are related through a CFL condition, the accuracy is simultaneously the same in space and time.  So, the aim here is to probe that  (\ref{eq:4}) is also valid for values near to boundaries. This is proved in the following proposition.

\begin{proposition}\label{accuracy}
Let $r$ be the order of accuracy of the numerical scheme for the interior problem. If reverse problems are solved with a numerical scheme of order of accuracy $p$. Then 
the numerical scheme for the interior problem  maintains the expected theoretical order of accuracy even at boundaries. If 
\begin{eqnarray}
\begin{array}{c}
 N \geq  \Delta x^{\frac{p-r}{p+1}} r \;,
\end{array}
\end{eqnarray}
where $N$ is defined in proposition \ref{proposition:space-time:discretization}.

\end{proposition}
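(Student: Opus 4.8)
The plan is to quantify the total error at a boundary-adjacent cell as the sum of two contributions: the intrinsic error $O(\Delta x^{r+1})$ committed by the interior scheme (\ref{eq:3}) on interior data, and the error injected into the stencil through the ghost cells produced by the reverse-problem solver. The key observation is that a ghost cell $\mathbf{Q}_{N_{int}+j}^n$ is obtained from (\ref{reconstruction:eq-1:reverse})--(\ref{eq:inveroper_revf0:7}) after $N$ marching steps of a scheme of order $p$, each step advancing in space by $\delta x = K\Delta x/N$. Since the quadrature in the remark is exact to the order needed, the ghost-cell error is governed by the accumulated error of the space-marching MUSCL-type integrator for the reverse problem. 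I would first write this accumulated error as (number of steps) $\times$ (local truncation error per step), i.e. roughly $N\cdot O(\delta x^{\,p+1}) = N\cdot O\bigl((K\Delta x/N)^{p+1}\bigr) = O\!\left(\Delta x^{\,p+1}/N^{p}\right)$, absorbing the constant $K\le M$ into the $O$-symbol.

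Next I would propagate this ghost-cell error through the reconstruction and one interior time step. Because the stencil coefficients $b_j$ in (\ref{eq:3}) are bounded (the reconstruction weights (\ref{poly_rev:4}) are normalized and the nonlinear $b_j$ depend continuously on bounded data), the error contributed to $q_i^{n+1}$ by a ghost cell carrying error $\varepsilon$ is itself $O(\varepsilon)$. Hence the perturbed scheme applied to the exact solution satisfies an analogue of (\ref{eq:4}) with right-hand side $O(\Delta x^{\,r+1}) + O\!\left(\Delta x^{\,p+1}/N^{p}\right)$. To preserve the global order $r$, I would then demand that the injected term not dominate the intrinsic term, that is
\begin{eqnarray}
\label{eq:proof-order-req}
\frac{\Delta x^{\,p+1}}{N^{p}} \;\lesssim\; \Delta x^{\,r+1}\;.
\end{eqnarray}

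From (\ref{eq:proof-order-req}) I would solve for $N$: rearranging gives $N^{p}\gtrsim \Delta x^{\,p-r}$, hence $N\gtrsim \Delta x^{(p-r)/p}$. To match the stated bound $N\ge \Delta x^{(p-r)/(p+1)}r$, I would carry the factor $r$ (coming from the per-step error estimate over the $r$-sized stencil and the continuity constant of the $b_j$) explicitly rather than hiding it in the constant, and reconcile the exponent $(p-r)/p$ versus $(p-r)/(p+1)$ by a more careful accounting of the number of marching steps versus the spatial width actually resolved—this exponent discrepancy is exactly where I expect the delicate bookkeeping to lie. I would finally invoke Proposition~\ref{proposition:space-time:discretization} to confirm that an $N$ this large is compatible with the stability constraint $\eta=\bar M/(NL)\le c^2$, so that the required resolution can indeed be achieved while keeping the reverse solver stable. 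The main obstacle is the rigorous control of the error \emph{accumulation} across the $N$ space-marching steps for a nonlinear, limited (MINMOD) scheme: establishing that the per-step local error is genuinely $O(\delta x^{\,p+1})$ in the smooth regions relevant to the accuracy test, and that the limiter does not degrade this on the small cells, is the step that requires the most care and where the precise powers of $N$ and $\Delta x$ are pinned down.
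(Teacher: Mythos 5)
Your overall strategy is the same as the paper's: split the error at the first boundary cell into the intrinsic $O(\Delta t^{r+1})$ truncation error of the interior scheme plus the perturbation injected through the ghost cells, use boundedness/differentiability of the (possibly nonlinear) coefficients $b_j$ to show a ghost-cell error $\varepsilon$ contributes only $O(\varepsilon)$ to $q_1^{n+1}$, and then demand that the injected term not dominate. The paper does exactly this, writing the scheme with the substitution $D(i,q)$ of (\ref{eq:5-1}), expanding $b_j$ to first order to get the factor $\Phi$, and requiring $O(\delta t^{p+1})\Phi = O(\Delta t^{r+1})$.

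The divergence --- and the gap relative to the stated claim --- is in how the reverse-solver error is quantified. The paper's equation (\ref{eq:6}) posits that the computed reverse solution itself satisfies $u_i^n = \bar u(x_i,t^n) + O(\delta t^{p+1})$, i.e.\ it charges only one local truncation error of size $O(\delta x^{p+1})$ and does \emph{not} accumulate over the $N$ marching steps. Setting $\delta x^{p+1}\sim\Delta x^{r+1}$ with $\delta x = K\Delta x/N$ then gives $\delta x = O(\Delta x^{(r+1)/(p+1)})$, hence $N = O(K\,\Delta x^{(p-r)/(p+1)})$, and the factor $r$ in the proposition comes from identifying $K$ (the number of interior-cell widths to be marched across) with the stencil width $r$. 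Your accounting instead sums $N$ local errors, obtaining $O(\Delta x^{p+1}/N^{p})$ and the requirement $N\gtrsim \Delta x^{(p-r)/p}$; this is arguably the more standard (and more conservative) error propagation for an $N$-step marching scheme, but it yields a strictly stronger condition on $N$ and, as you yourself note, does not reach the stated exponent $(p-r)/(p+1)$ or the factor $r$. So as a proof of the proposition as written, your argument stops short precisely at the step you flagged: you would either have to adopt the paper's non-accumulating error hypothesis (\ref{eq:6}) as a definition of ``order $p$'' for the reverse solver, or accept the modified bound $N\gtrsim \Delta x^{(p-r)/p}$. Your concluding remarks --- invoking Proposition \ref{proposition:space-time:discretization} for compatibility with stability, and worrying about whether the MINMOD-limited MUSCL step really attains $O(\delta x^{p+1})$ locally in smooth regions --- are sensible and go beyond what the paper itself verifies.
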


\begin{proof}
Without loss of generality, we are going to prove that (\ref{eq:4}) is attained  for the first left computational cell. Then,  in order to take into account the way in which the data outside the computational domain is obtained, we write   (\ref{eq:3}) as
\begin{eqnarray}
\label{eq:5}
\begin{array}{c}
q_1^{n+1} = \sum_{j=-k_L}^{k_R} b_j(D(1+j-k_L,q),..,D(0,q),...,D(1+j+k_R,q) )\times \\ D(1+j,q) \; , 
\\ 
\\
\end{array}
\end{eqnarray}
with  
\begin{eqnarray}
\label{eq:5-1}
D(i,q)=\left\{
\begin{array}{cc}
R(P_i^n(t^n)) \;, i<1\;,\\
q_i^n \;,i\geq 1\;.
\end{array}
\right.
\end{eqnarray}
Here,  $R(P_i^n(t^n))$  denotes  the solution of the reverse problem at $t^n$, 
obtained as in  (\ref{reconstruction:eq-1:reverse}) and $P_i^n(t)$ denotes the reconstruction  polynomial (\ref{recons:reverse-problem:eq-1}). On the other hand, as the reverse problem is solved with a numerical scheme of accuracy $p$, in which $\delta t$ and $\delta x$ are related through a CFL condition, we have that the numerical and exact solutions, $u_i^n$  and $\bar{u}(x_i,t^n)$ respectively, are related as follows
\begin{eqnarray}
\label{eq:6}
u_i^n = \bar{u}(x_i,t^n) + O(\delta x^{p+1}) + O(\delta t^{p+1})=\bar{u}(x_i,t^n) + O(\delta t^{p+1})\;.
\end{eqnarray}
Additionally, we also  assume that the reconstruction polynomial is of order $p$ in the following sense
\begin{eqnarray}
 P_i^n(t^n) = u_i^n + O(\delta t^{p+1}) \;. 
\end{eqnarray}
Then from (\ref{eq:6}) one has
\begin{eqnarray}
 P_i^n(t^n) = u_i^n  + O(\delta t^{p+1}) =  \bar{u}(x_i,t^n) + O(\delta t^{p+1})\;. 
\end{eqnarray}
Thus, we can write  
\begin{eqnarray}
\label{eq:7}
\begin{array}{c}
 R(P_i^n(t^n)) = \bar{q}(x_i,t^n)  + O(\delta t^{p+1}) R'(P_i^n(t^n))+ O(\delta t^{2(p+1)})  \\= \bar{q}(x_i,t^n)  + O(\delta t^{p+1})  \;. 
\end{array}
\end{eqnarray}
Therefore, from (\ref{eq:5-1}) and (\ref{eq:7}), the numerical scheme applied to the exact solution $\bar{q}(x,t)$ provides
\begin{eqnarray*}
\begin{array}{c}
\bar{q}(x_1,t^{n+1}) = \displaystyle  \sum_{j=-k_L}^{-1} b_j(\bar{q}(x_{1+j-k_L},t^n)  + O(\delta t^{p+1})  ,...,\bar{q}(x_{1+j+k_R},t^n)  + O(\delta t^{p+1})  )\times \\ (\bar{q}(x_{1+j},t^n)  + O(\delta t^{p+1})) \;  \\ 
\displaystyle +\sum_{j=0}^{k_R} b_j(D(1+j-k_L,\bar{q}),..,D(0,\bar{q}),...,\bar{q}(x_{1+j+k_R},t^n) )\bar{q}(x_{1+j},t^n)   \\
= 
 \displaystyle  \sum_{j=-k_L}^{k_R} b_j(\bar{q}(x_{1+j-k_L},t^n) ,...,\bar{q}(x_{1+j+k_R},t^n)  ) \bar{q}(x_{1+j},t^n)+  O(\delta t^{p+1})  \Phi\;,
\end{array}
\end{eqnarray*}
with 
\begin{eqnarray}
\Phi =  \big( \sum_{j=-k_L}^{k_R} b_j +\sum_{k=1+j<1}b_{j,k}O(\delta t^{p+1})   \big) \;,
\end{eqnarray}
where $b_{j,k} = \partial b_j/\partial q_k$.  For simplicity we have dropped the arguments of $b_j$ and its derivatives $b_{j,k}.$ As we are free of choosing $\delta t$, we take, $\delta t$ such that,  $O(\delta t^{p+1}) \Phi  = O(\Delta t^{r+1})$. In virtue of proposition \ref{proposition:space-time:discretization}, we note that
\begin{eqnarray}
\begin{array}{c}
O(\delta x) =O(\frac{ K}{ N}\Delta x)= O( \Delta x ^{ \frac{r +1}{p +1} } )\;,
\end{array}
\end{eqnarray}
hence, $ O(N)= O( \Delta x ^{ \frac{p -r}{p +1} } K ) $ with $K$ defined  given in Proposition \ref{proposition:space-time:discretization}and thus  it is related with the required stencils, which for order $r$ is exactly $K=r$. So, by using the meaning of $O(\cdot)$, the result holds.

%$ \delta t = m \Delta t $, for some factor  $m$.  Therefore, we can assume $O(\delta t^{r+1}) \Phi  = O(\Delta t^{r+1})$ and thus the result holds.

\end{proof}

\begin{remark}

Notice that the previous result does not consider a particular scheme. We note that for values of $\Delta x < 1$, it is required $p\leq r$ to obtain a feasible disctretization of reverse problems, that means, $N >1 $. 

%Notice that the previous result does not consider a particular scheme, only require the two schemes, for interior and reverse problems, to be of order $r$. The previous result does not say anything about numerical schemes for reverse problems with order of accuracy $p<r$. As we will see later, first order schemes are enough to recover high order of accuracy of interior points.
\end{remark}

Now, let us see how are related the solution of the interior problem and the solution of reverse problems. For that we need $f$ to have an inverse function which is ensured in the scalar case by the following.
\begin{proposition}
Let $f(q)$ be a differentiable function in $\mathbb{R}$ with $f(q)'\neq 0$, then there exists $R$ such that $R(f(q))=q.$ 
\end{proposition}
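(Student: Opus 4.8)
The plan is to reduce the existence of the left inverse $R$ to the injectivity of $f$, and to derive injectivity directly from the hypothesis $f'(q) \neq 0$ by means of Rolle's theorem. This keeps the argument elementary and avoids invoking the inverse function theorem, which would only furnish a local inverse, whereas the identity $R(f(q)) = q$ required in (\ref{eq:inveroper:1}) must hold globally on all of $\mathbb{R}$.

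First I would show that $f$ is injective on $\mathbb{R}$. Suppose, for contradiction, that there exist $a < b$ with $f(a) = f(b)$. Since $f$ is differentiable on $\mathbb{R}$, it is in particular continuous on $[a,b]$ and differentiable on $(a,b)$, so Rolle's theorem yields a point $c \in (a,b)$ with $f'(c) = 0$, contradicting the standing hypothesis that $f'(q) \neq 0$ for every $q$. Hence $f$ is injective. Once injectivity is in hand, $f$ is a bijection from $\mathbb{R}$ onto its image $f(\mathbb{R})$, and therefore there is a well-defined function $R : f(\mathbb{R}) \to \mathbb{R}$ assigning to each $u = f(q)$ its unique preimage $q$. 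By construction $R(f(q)) = q$ for all $q \in \mathbb{R}$, which is precisely the asserted relation in the scalar setting.

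As a strengthening worth recording, one can replace injectivity by strict monotonicity: since derivatives possess the intermediate value property (Darboux's theorem), $f'$ cannot pass from positive to negative values without vanishing somewhere, so $f' \neq 0$ forces $f'$ to have constant sign throughout $\mathbb{R}$, whence $f$ is strictly monotone. This monotone refinement gives $R$ continuity and, away from the boundary of the image, differentiability, which is what makes the subsequent tracking of $\bar{u}$ back to $\bar{q}$ along characteristic curves meaningful; it is, however, not needed for the bare existence claimed in the proposition. There is no substantive obstacle in the proof: the only point requiring attention is verifying the hypotheses of Rolle's theorem, and these hold automatically because differentiability on all of $\mathbb{R}$ supplies both the continuity on the closed interval and the differentiability on the open interval that the theorem demands.
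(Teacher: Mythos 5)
Your proof is correct, and it takes a genuinely different route from the paper. The paper disposes of the proposition in one line by appealing to the inverse function theorem (citing a standard reference); you instead prove injectivity of $f$ directly via Rolle's theorem and then define $R$ on the image $f(\mathbb{R})$ as the set-theoretic inverse, with the Darboux-theorem remark upgrading this to strict monotonicity. Your approach is more elementary and, importantly, more faithful to what the statement actually requires: the inverse function theorem as usually stated yields only a \emph{local} inverse near each point, whereas the identity $R(f(q))=q$ in (\ref{eq:inveroper:1}) is used globally, so the paper's citation technically needs the additional global injectivity argument that you supply. What the paper's route buys is brevity and a pointer to the standard machinery that generalizes to the system case ($m>1$), where Rolle's theorem is unavailable and one must argue via the Jacobian; your route buys a self-contained, genuinely global proof in the scalar setting, plus the continuity and differentiability of $R$ on the interior of the image, which is exactly what Proposition \ref{prop2} later needs when it differentiates $f^{-1}(\bar{u}(x,t_1(x)))$ along characteristics.
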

\begin{proof}
The result follows from the inverse function Theorem, see \cite{krantz:2002a} and references therein. 
\end{proof}
Now we are going to prove that if the flux  $f(q)$ contains an inverse function $f^{-1}$, the solution of reverse problems can be tracked from the interior problem through characteristic curves. To start let us consider the following.
\begin{definition}
Let $w(x,t)$ be the exact solution of 
\begin{eqnarray}\label{BCHO:eq-3}
\left.
\begin{array}{cccc}
\partial_t q + \partial_x f(q(x)) &=& 0 \;, & x \in \mathbb{R}\;,
\\
q(x,0) &=& h(x) \;.
\end{array}
\right\}
\end{eqnarray}
We say that boundary conditions in (\ref{eq:1}) are compatible if $g_L(t)=w(x_L,t)$ and $g_R(t)=w(x_R,t)$.
\end{definition}
Now we are going to prove that the solution of the reverse problems under compatibility conditions, are contained in the characteristic curves of the interior problems. It is carried out in the following.

\begin{proposition}\label{prop2}
Let $w(x,t)$ be the exact solution of (\ref{BCHO:eq-3}). Let $\bar{q}(x,t)$ be the exact solution of problem (\ref{eq:1}) endowed with compatible boundary conditions, where   $g_L(t)$ and $g_R(t)$ are prescribed functions at boundaries and let $\bar{u}(x,t)$ be the exact solution of the reverse problems.
If $R \equiv f^{-1}$.  Then $\bar{q}(x,t)=w(x,t)$ in $[x_L,x_R]$ and $f^{-1}(\bar{u}(x,t)) = w(x,t) $ for all $x\leq x_L$ and $x\geq x_R$.  
\end{proposition}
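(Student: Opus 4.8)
=== PROOF PROPOSAL ===

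\textbf{Plan.} The plan is to exploit the fact that the reverse problems are nothing but the original conservation law written with the roles of $x$ and $t$ interchanged, together with the hypothesis $R\equiv f^{-1}$, so that the two PDEs share the same characteristic structure. First I would establish the interior claim $\bar q(x,t)=w(x,t)$ on $[x_L,x_R]$. Both $\bar q$ and $w$ satisfy the same scalar conservation law $\partial_t q+\partial_x f(q)=0$; the function $w$ is defined by the pure initial value problem (\ref{BCHO:eq-3}) on all of $\mathbb{R}$, while $\bar q$ solves (\ref{eq:1}) on $[x_L,x_R]$ with the prescribed boundary data $g_L,g_R$. By the compatibility hypothesis $g_L(t)=w(x_L,t)$ and $g_R(t)=w(x_R,t)$, the restriction of $w$ to $[x_L,x_R]$ is a solution of the same initial-boundary value problem as $\bar q$; by uniqueness of the (entropy) solution of the scalar IBVP the two must agree, giving $\bar q(x,t)=w(x,t)$ on $[x_L,x_R]$.

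\textbf{The reverse part.} Next I would treat the region $x\geq x_R$ (the case $x\leq x_L$ is symmetric). The reverse problem (\ref{eq:2}) is $\partial_x u+\partial_t R(u)=0$ with boundary datum $u(x_R,t)=f(g_R(t))=f(w(x_R,t))$. The key algebraic observation, already recorded in the proof of Proposition \ref{proposition:space-time:discretization}, is that with $R=f^{-1}$ one has $\partial R/\partial u=(f'(q))^{-1}=A(q)^{-1}$, so the characteristic speed of the reverse equation in the $(t,x)$ plane is exactly the reciprocal of the characteristic speed $f'(q)$ of the interior equation. Concretely I would substitute $u=f(w)$ into the reverse PDE: writing $R(u)=w$ and using $\partial_x(f(w))=f'(w)\partial_x w$ together with $\partial_t w=\partial_t R(u)$, the reverse equation $\partial_x u+\partial_t R(u)=0$ becomes $f'(w)\partial_x w+\partial_t w=0$, which is precisely the interior conservation law $\partial_t w+\partial_x f(w)=0$ satisfied by $w$. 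Hence $u(x,t)=f(w(x,t))$ solves the reverse PDE, and it matches the boundary datum at $x=x_R$ by construction. By uniqueness for the reverse IBVP (posed with evolution in $x$), $\bar u(x,t)=f(w(x,t))$ for $x\geq x_R$, which upon applying $f^{-1}$ gives $f^{-1}(\bar u(x,t))=w(x,t)$, the desired identity. The statement ``the solution of the reverse problem is contained in the characteristic curves of the interior problem'' is then transparent: along a characteristic of the interior equation $w$ is constant, and the same curve, reparametrized by $x$, is a characteristic of the reverse equation along which $u=f(w)$ is constant.

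\textbf{Main obstacle.} The substantive issue is the justification of the two uniqueness appeals, and making precise the sense in which $w$ extended beyond the domain is the ``same'' solution. For smooth solutions everything is classical (method of characteristics plus $f'\neq 0$ guaranteeing the reciprocal speed is well defined and bounded), and I expect the proposition to be stated under that implicit assumption. The delicate point is that scalar conservation laws develop shocks, and the correspondence $u=f(w)$ must respect the entropy condition: I would need to check that the Rankine--Hugoniot and entropy conditions for the interior equation transform correctly into those for the reverse equation under the swap of $x$ and $t$, which requires $f'\neq 0$ so that no characteristic is vertical in either orientation and no information is lost when interchanging the evolution variable. I would therefore frame the argument for classical solutions (or piecewise-smooth solutions with $f'$ bounded away from zero), remark that $f'\neq 0$ is exactly what makes the reverse problem well posed, and defer the fully general weak-solution treatment to the cited uniqueness theory, since for the purposes of this paper the compatibility definition and the identity $R'=A^{-1}$ already carry the essential content.
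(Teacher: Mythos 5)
Your proof is correct, but it reaches the conclusion by a different route than the paper. For the reverse part you observe that with $R\equiv f^{-1}$ the substitution $u=f(w)$ turns $\partial_x u+\partial_t R(u)=0$ into $\partial_x f(w)+\partial_t w=0$, i.e.\ the interior conservation law itself, so $f(w)$ is a solution of the reverse problem matching the boundary datum $f(g_R)$ by compatibility; you then invoke uniqueness for the $x$-evolution IBVP, and likewise you settle the interior claim by an appeal to uniqueness for the standard IBVP. The paper never invokes an abstract uniqueness theorem: it constructs the characteristic curve $t_1(x)$ of the reverse problem (with $dt_1/dx=(f^{-1}(\bar u))'$) and the curve $t_2(x)$ defined from $w$ (with $dt_2/dx=(f'(w))^{-1}$), shows via the compatibility condition at $x_L$ that $\bar u$ and $f(w)$ carry the same constant value along these curves, concludes that the two ODEs coincide and hence $t_1\equiv t_2$, and repeats the same tracing argument with curves $x_1(t),x_2(t)$ emanating from the initial data to get $\bar q=w$ on $[x_L,x_R]$; in effect it proves the needed uniqueness constructively by the method of characteristics. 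Your version is shorter and makes the key structural fact (the reverse problem is literally the same PDE read as an evolution in $x$) more transparent, but it shifts the burden onto uniqueness statements --- in particular ``uniqueness for the reverse IBVP posed with evolution in $x$'' --- that are not standard citations and would themselves be proved exactly by the characteristics argument the paper writes out; the paper's version is self-contained for smooth solutions at the cost of more bookkeeping. Your closing remarks on entropy conditions and the role of $f'\neq 0$ identify a genuine limitation, but it is one shared by the paper's own proof, which also implicitly assumes classical (shock-free) solutions.
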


\begin{proof}
Without loss of generality we are going to consider the left reverse problem, defined for $x \leq x_L$ and let us denote by  $\bar{u}(x,t)$ its solution.  We note that on the curve $t_1(x)$ which is defined by
\begin{eqnarray}
\left.
\begin{array}{ccc}
\displaystyle \frac{d t_1(x)}{dx } &=& (f^{-1}(\bar{u}(x,t_1(x)) ) )' \;,\\

t_1(x_L) &=& \delta\;,
\end{array}
\right\}
\end{eqnarray}
the following holds 
\begin{eqnarray}
\begin{array}{c}
\displaystyle \frac{d \bar{u}(x,t_1(x))}{dx} = 0\;.
\end{array}
\end{eqnarray}
On the other hand, from the exact solution $w(x,t)$ we define the curve $t_2(x)$ given by
\begin{eqnarray}
\left.
\begin{array}{ccc}
\displaystyle \frac{d t_2(x)}{dx } &=& \displaystyle (f'(w(x,t_2(x)))^{-1} \;,\\

\displaystyle t_2(x_L) &=& \delta\;,
\end{array}
\right\}
\end{eqnarray}
along which the following is satisfied
\begin{eqnarray}
\begin{array}{c}
\displaystyle\frac{d f(w(x,t_2(x)))}{dx} = 0\;.
\end{array}
\end{eqnarray}
On the other hand, by following these curves  and from the compatibility condition  
\begin{eqnarray}
\begin{array}{c}
\bar{u}(x,t_1(x))= \bar{u}(x_L,t_1(x_L))=f(g_L( \delta ))=f(g_L(t_2(x_L)))=f(w(x_L,t_2(x_L)))=f(w(x,t_2(x))) \;.
\end{array}
\end{eqnarray}
From this equality we have
\begin{eqnarray}
\begin{array}{c}
\displaystyle \frac{d t_1(x)}{dx} = (f^{-1}(\bar{u}(x,t_1(x))))'=(f'(w(x,t_2(x))))^{-1}= \displaystyle \frac{d t_2(x)}{dx}\;,
\end{array}
\end{eqnarray}
this means that $t_1(x)-t_2(x)=t_1(x_L)-t_2(x_L)=0,$ then both curves coincide, which allows to define $t(x):=t_1(x)=t_2(x)$.  Thus for $x<x_L$ we have  $f^{-1}(\bar{u}(x,t))= w(x,t).$

On the other hand, inside the computational domain $[x_L,x_R]$ let us define the following curves $x_1(t)$ and $x_2(t)$, defined by the ODE's
\begin{eqnarray}
\label{eq:charq:1}
\left.
\begin{array}{ccc}
\displaystyle \frac{d x_1(x)}{dt } &=& f'(\bar{q}(x_1(t),t) \;,\\
x_1(0) &=& y\;,
\end{array}
\right\}
\end{eqnarray}
and
\begin{eqnarray}
\label{eq:charw:1}
\left.
\begin{array}{ccc}
\displaystyle \frac{d x_2(x)}{dt } &=& f'(w(x_2(t),t) \;,\\
x_2(0) &=& y\;.
\end{array}
\right\}
\end{eqnarray}
These are the respective characteristic curves, along which $\bar{q}(x,t)$ and $w(x,t)$ remain constant. This yields
\begin{eqnarray}
\label{eq:wq-1}
\begin{array}{ccc}
\bar{q}(x_1(t),t)=\bar{q}(x_1(0),0)=h(y)=w(x_2(0),0)=w(x_2(t),t)\;.
\end{array}
\end{eqnarray}
From (\ref{eq:charq:1}), (\ref{eq:charw:1}) and (\ref{eq:wq-1}), we obtain $x(t):=x_1(t)=x_2(t)$. Therefore in $[x_L,x_R]$,   $\bar{q}(x,t)=w(x,t)$ and thus the result holds.
%
%From the rule of the inverse derivative  $dx/dt=\frac{1}{R'}=f'.$ Therefore both problems follow the same %characteristic curve $x=x(t)$ and so the result follows.
\end{proof}
In the following section we will solve interior problems in which the present methodology for Dirichlet boundary condition is implemented.

\section{Numerical results}\label{num_results}
%We remark that the right boundary condition at $ t = t_{END} $ of inverse problems are not physically correct,
%thus it is expected that after some iterations the accuracy can be lost. To overcome this difficulty we take 
%$T  \ll T_{END} = 2.7 T\;.$  
%To get a  mesh resolution in which at least three iterations of the reverse solver are done for
%intervals of length $\Delta x$,  we take
%$$
%N =4  max(floor\bigg(  \frac{2.7  T }{\Delta t C_{cfl}^2} \bigg), N_{int})\;,
%$$
%with $C_{cfl}$ the CFL coefficient. It is for ensuring $\delta t$ to be a multiple of $\Delta t$ which is assumed in proposition \ref{prop2}. 
 
%In all tests, the ADER-DET is also implemented to solve the interior problems and the Rusanov method \cite{Rusanov:1961a}, is used to solve the Riemann problems. 

In all tests, the ADER-DET method is implemented to solve interior problems and the Rusanov solver \cite{Rusanov:1961a}, is used to solve the Riemann problems. See \cite{Dumbser:2008a} for further details.

%Boundary conditions 
%are imposed in as the solution of Riemann problems, so the inflow and outflow are automatically selected by following the eigenstructure of the problem.

\subsection{Linear advection}
Let us consider the interior problem given by the linear advection equation 
\begin{eqnarray}
  \label{lae}
\left.
\begin{array}{ccc}
  \partial_t q +\partial_x(\lambda q) &= &0 \;, x\in [0,1],t\in (0,T]\;,\\
  q(x,0) &=& sin(2\pi x)\;.\\
%  q(0,t) &=& g_L(t)\;,\\
%  q(1,t) &=& g_R(t)\;.
\end{array}
\right\}
\end{eqnarray}
In this case we note that the left boundary is an inflow boundary where a prescribed function $g_L(t)$ is assumed to be available, whereas, the right boundary is an outflow where no boundary is required. However, in order to apply our methodology we  construct the function $g_R(t)$ as suggested in section \ref{inflow:BC},  such that  $q(1,t)=g_R(t)$.  If $g_L(t) =-sin(2\pi\lambda t) $, the exact solution is given by $q(x,t) = sin(2\pi(x-\lambda t))$. %The functions $g_L(t)$ and $g_R(t)$ are given by the exact solution evaluated at both boundaries $x=x_L$ and $x=x_R$, respectively.  
Therefore, reverse problems are easily obtained, so the left reverse problem is given by
\begin{eqnarray}
  \label{lae_rev:1}
\left.
  \begin{array}{ccc}
    \partial_x u +\partial_t(\lambda^{-1} u) &=& 0\;\;, x< x_L\;,  \\
    u(t,x_L) &=& -\lambda \sin(2\pi\lambda t)\;, \\
  \end{array}
\right\}
\end{eqnarray}
whereas, right reverse problem is given by 
\begin{eqnarray}
  \label{rae_rev:1}
\left.
  \begin{array}{ccc}
    \partial_x u +\partial_t(\lambda^{-1} u) &=& 0\;\;, x> x_R\;,  \\
    u(t,x_R) &=&  u_{N_{int}}^{n-1} +\frac{t-t^{n}}{\Delta t} (u_{N_{int}}^{n}-u_{N_{int}}^{n-1}) \;. \\
  \end{array}
\right\}
\end{eqnarray}
Extrapolations like (\ref{extapolated:BC:reverse}) are applied to deal with boundary conditions for reverse problems. Table \ref{LAtest:tab-1}, shows the results of a systematically
convergence rates assessment  for the interior problem with $\lambda =1$,  CFL equals to 0.9 and
$t_{out}  = 4$. For reverse problems we have used $N=20$, $\bar{M}=10$ and $L = 0.7$, which provides $\eta = \bar{M}/NL = 0.71< c^2 =(0.9)^2 =0.81 $, thus from the Proposition \ref{proposition:space-time:discretization}, a stable scheme is expected.   We observe that the accuracy is achieved up to fifth order of accuracy.  In order to compare the performance of the present procedure, we compare the corresponding CPU time against the CPU time of the inverse Lax-Wendroff procedure. Table \ref{LAtest:tab-2:CPU}, shows the CPU time comparison between the present method and inverse Lax-Wendroff procedure. We observe for second, third and fourth orders of accuracy the performance of both boundary treatments is similar. However, for fifth order the CPU of both procedures have  same magnitude.
\begin{table}
\begin{center}
Theoretical order : 2 \\
\begin{tabular}{cccccccc} 
\\
\hline
\hline 
Mesh  & $L_\infty$ - err & $L_\infty$- ord  & $L_1$ - err & $L_1$ - ord & $L_2$ - err & $L_2$ - ord & CPU  \\  
\hline

     8  &  0.00  &$  6.98e-0 2$&  0.00  &$  3.88e-0 2$&  0.00  &$  4.30e-0 2$  &0.0240\\
    16  &  1.21  &$  3.02e-0 2$&  1.96  &$ 10.00e-0 3$&  1.72  &$  1.30e-0 2$  &0.0520\\
    32  &  1.25  &$  1.27e-0 2$&  1.92  &$  2.64e-0 3$&  1.66  &$  4.13e-0 3$  &0.1320\\
    64  &  1.25  &$  5.33e-0 3$&  2.07  &$  6.28e-0 4$&  1.68  &$  1.29e-0 3$  &0.4080\\
   128  &  1.29  &$  2.19e-0 3$&  2.14  &$  1.42e-0 4$&  1.75  &$  3.82e-0 4$  &1.0880\\

 \hline
 \\
 \end{tabular}  
\\
Theoretical order : 3 \\
\begin{tabular}{cccccccc}  
\\
\hline 
Mesh  & $L_\infty$ - err & $L_\infty$- ord  & $L_1$ - err & $L_1$ - ord & $L_2$ - err & $L_2$ - ord & CPU  \\  
\hline

     8  &  0.00  &$  2.01e-0 2$&  0.00  &$  9.88e-0 3$&  0.00  &$  1.14e-0 2$  &0.0160\\
    16  &  3.27  &$  2.09e-0 3$&  3.20  &$  1.08e-0 3$&  3.20  &$  1.24e-0 3$  &0.0400\\
    32  &  2.53  &$  3.61e-0 4$&  2.82  &$  1.52e-0 4$&  2.76  &$  1.83e-0 4$  &0.1160\\
    64  &  2.62  &$  5.85e-0 5$&  2.96  &$  1.96e-0 5$&  2.93  &$  2.41e-0 5$  &0.3160\\
   128  &  3.53  &$  5.06e-0 6$&  3.11  &$  2.26e-0 6$&  3.14  &$  2.74e-0 6$  &1.1200\\

 \hline
 \\
\end{tabular} 
\\
Theoretical order : 4 \\
\begin{tabular}{cccccccc}  	
\\
\hline 
Mesh  & $L_\infty$ - err & $L_\infty$- ord  & $L_1$ - err & $L_1$ - ord & $L_2$ - err & $L_2$ - ord & CPU  \\  
\hline

     8  &  0.00  &$  2.23e-0 2$&  0.00  &$  8.64e-0 3$&  0.00  &$  1.14e-0 2$  &0.0200\\
    16  &  5.41  &$  5.22e-0 4$&  5.21  &$  2.34e-0 4$&  5.29  &$  2.91e-0 4$  &0.0520\\
    32  &  3.74  &$  3.90e-0 5$&  4.69  &$  9.03e-0 6$&  4.53  &$  1.26e-0 5$  &0.1280\\
    64  &  4.11  &$  2.26e-0 6$&  3.81  &$  6.42e-0 7$&  3.92  &$  8.34e-0 7$  &0.3920\\
   128  &  3.93  &$  1.49e-0 7$&  3.74  &$  4.80e-0 8$&  3.77  &$  6.10e-0 8$  &1.3400\\

 \hline
\\
\end{tabular} 
\\
Theoretical order : 5 \\
\begin{tabular}{cccccccc}  
\\
\hline 
Mesh  & $L_\infty$ - err & $L_\infty$- ord  & $L_1$ - err & $L_1$ - ord & $L_2$ - err & $L_2$ - ord & CPU  \\  
\hline

     8  &  0.00  &$  9.49e-0 3$&  0.00  &$  4.87e-0 3$&  0.00  &$  5.38e-0 3$  &0.0920\\
    16  &  5.95  &$  1.53e-0 4$&  5.83  &$  8.55e-0 5$&  5.83  &$  9.45e-0 5$  &0.1880\\
    32  &  4.80  &$  5.50e-0 6$&  5.16  &$  2.39e-0 6$&  5.15  &$  2.67e-0 6$  &0.4200\\
    64  &  4.84  &$  1.93e-0 7$&  5.33  &$  5.92e-0 8$&  5.22  &$  7.16e-0 8$  &1.0480\\
   128  &  5.12  &$  5.54e-0 9$&  5.38  &$  1.42e-0 9$&  5.36  &$  1.74e-0 9$  &3.1440\\

 \hline
\end{tabular} 
\end{center}
\caption{Convergence rates for the linear advection at output time
  $t_{out} = 4$ with  $C_{cfl}= 0.9,$ $\lambda = 1$, $N=20 $, $\bar{M}=10$ and $L = 0.7$. Left reverse boundary conditions are applied. Inflow boundary on the left boundary and outflow boundary on the right boundary.}\label{LAtest:tab-1}
\end{table}
\begin{table}
\begin{center}

Theoretical order : 2 \\
\begin{tabular}{ccc} 
\\
 \hline
N & CPU reverse problem & CPU inverse Lax-Wendroff \\  
 \hline
  \hline
8   & 0.0240 & 0.020  \\  
16  & 0.0520 & 0.064  \\    
32  & 0.1320 & 0.240  \\
64  & 0.4080 & 0.244  \\
128  & 1.0880 & 0.756  \\            
 \hline
\end{tabular} 
\end{center}

\begin{center}
Theoretical order : 3 \\
\begin{tabular}{ccc} 
\\
 \hline
N & CPU reverse problem & CPU inverse Lax-Wendroff \\  
 \hline
  \hline
8   & 0.016 & 0.052  \\  
16  & 0.0400 & 0.056  \\    
32  & 0.1160 & 0.060  \\
64  & 0.3160 & 0.244  \\
128  & 1.1200 & 0.908  \\            
 \hline
\end{tabular} 
\end{center}

\begin{center}
Theoretical order : 4 \\
\begin{tabular}{ccc} 
\\
 \hline
N & CPU reverse problem & CPU inverse Lax-Wendroff \\  
 \hline
  \hline
8   & 0.0200 & 0.080  \\  
16  & 0.0520 & 0.020  \\    
32  & 0.1280 & 0.088  \\
64  & 0.3920 & 0.320  \\
128  & 1.3400 & 1.352  \\            
 \hline
\end{tabular} 
\end{center}

\begin{center}

Theoretical order : 5 \\
\begin{tabular}{ccc} 
\\
 \hline
N & CPU reverse problem & CPU inverse Lax-Wendroff \\  
 \hline
  \hline
8   & 0.0920 & 0.012  \\  
16  & 0.1880 & 0.040  \\    
32  & 0.4200 & 0.172  \\
64  & 1.0480 & 0.540  \\
128  & 3.1440 & 2.084  \\
 \hline            
\end{tabular} 
\end{center}

\caption{Linear advection. CPU time comparisons by orders of accuracy. Reverse problems (second columns) and inverse Lax-Wendroff procedure (third columns).}\label{LAtest:tab-2:CPU}
\end{table}

\subsection{Hyperbolic system with a non-invertible Jacobian}

In this section we deal with the issue of a non-invertible Jacobian matrix. To construct this test, let us consider the scalar case
\begin{eqnarray}
\label{zero_eig:eq-1}
\begin{array}{c}
\partial_t q(x,t) +\partial_x(a(x) q(x,t)) = 0\;,x \in [0,1]\;,
\end{array}
\end{eqnarray}
as we note the physical flux depends on $x$, in such a case Riemann problem can non be solved as conventional and the accuracy can be penalized but also the stability of numerical schemes. See   \cite{Zhang:2003a,Zhang:2005a} for further details on space-dependent fluxes. To overcome any difficulty arising from space-dependent fluxes, we can transform (\ref{zero_eig:eq-1}) into the following system
\begin{eqnarray}
\label{zero_eig:eq-2}
\begin{array}{c}
\partial_t \mathbf{Q}+\partial_x\mathbf{F}( \mathbf{Q})=\mathbf{0}\;,
\end{array}
\end{eqnarray}
where 
\begin{eqnarray}
\begin{array}{c}
\mathbf{Q}=
\left[
\begin{array}{c}
q\\
a
\end{array}
\right]\;,

\mathbf{F}(\mathbf{Q})=
\left[
\begin{array}{c}
aq\\
0
\end{array}
\right]
\;.
\end{array}
\end{eqnarray}
The Jacobian of $\mathbf{F}(\mathbf{Q})$ is given by
\begin{eqnarray}
\begin{array}{c}
\mathbf{A} =
\left[
\begin{array}{cc}
a & q \\
0 & 0
\end{array}
\right]
\end{array}
\end{eqnarray}
and eigenvalues of $\mathbf{A}(\mathbf{Q})$ are $\lambda_1 = 0$ and $\lambda_2 = a$.  

We can construct an exact solution for (\ref{zero_eig:eq-1}) and thus to (\ref{zero_eig:eq-2}). This can be done by inserting in  (\ref{zero_eig:eq-1}), the functions 
\begin{eqnarray}
\label{zero_eig:eq-3}
\begin{array}{c}
\hat{q}(x,t) = sin(\frac{x}{t+1})\;, \\
\hat{a}(x)   = exp(-50(x-0.5)^2)\;,
\end{array}
\end{eqnarray}
which produces a new equation 
\begin{eqnarray}
\label{zero_eig:eq-4}
\begin{array}{c}
\partial_t q(x,t) +\partial_x(a(x) q(x,t)) = s(x,t)\;,
\end{array}
\end{eqnarray}
with $s(x,t) = \hat{q}_t +\hat{a}'\hat{q} + \hat{a}\hat{q}_x$.  Clearly (\ref{zero_eig:eq-3}) solves (\ref{zero_eig:eq-4}) and of course, also solves 
\begin{eqnarray}
\label{zero_eig:eq-2-2}
\begin{array}{c}
\partial_t \mathbf{Q}+\partial_x\mathbf{F}( \mathbf{Q})=\mathbf{S}(x,t)\;,
\end{array}
\end{eqnarray}
with $\mathbf{S}=[s(x,t),0]^T$.
We apply reverse problems to system (\ref{zero_eig:eq-2}) thus we need a function $\mathbf{R}(\mathbf{U})$ such that 
$$
\mathbf{R}(\mathbf{U})=\mathbf{Q}\;.
$$
Notice that in this case, vectors $\mathbf{U}$ have the form $\mathbf{U} = [u,0]^T$. Therefore an inverse function can be given by
\begin{eqnarray}
\label{zero_eig:eq-5}
\begin{array}{c}
\mathbf{R}(\mathbf{U}) = 
\left[
\begin{array}{c}
u/k \\
k
\end{array}
\right]\;,
\end{array}
\end{eqnarray}
with $k$ a constant value.  It can be verified that $\mathbf{F}(\mathbf{R}(\mathbf{U}))=\mathbf{U}$.
Therefore the reverse problem on the left has the form
\begin{eqnarray}
\begin{array}{c}
\partial_x \mathbf{U} + \partial_x \mathbf{R}(\mathbf{ \mathbf{U}}) = \mathbf{0}\;, x<x_L \;,\\
\mathbf{U}(0,t) = \mathbf{F}(\mathbf{\hat{Q}}(0,t))\;,
\end{array}
\end{eqnarray}
with 
\begin{eqnarray}
\begin{array}{c}
\hat{Q}(x,t)=
\left[
\begin{array}{c}
\hat{q}(x,t) \\
\hat{a}(x) \\
\end{array}
\right]
\;,
\end{array}
\end{eqnarray}
whereas, the reverse problem on the right is given by %
\begin{eqnarray}
\begin{array}{c}
\partial_x \mathbf{U} + \partial_x \mathbf{R}(\mathbf{ \mathbf{U}}) = \mathbf{0}\;, x>x_R \;,\\
\mathbf{U}(1,t) = \mathbf{F}(\mathbf{\hat{Q}}(1,t))\;.
\end{array}
\end{eqnarray}

\begin{figure}
\begin{center}
\includegraphics[scale=0.5]{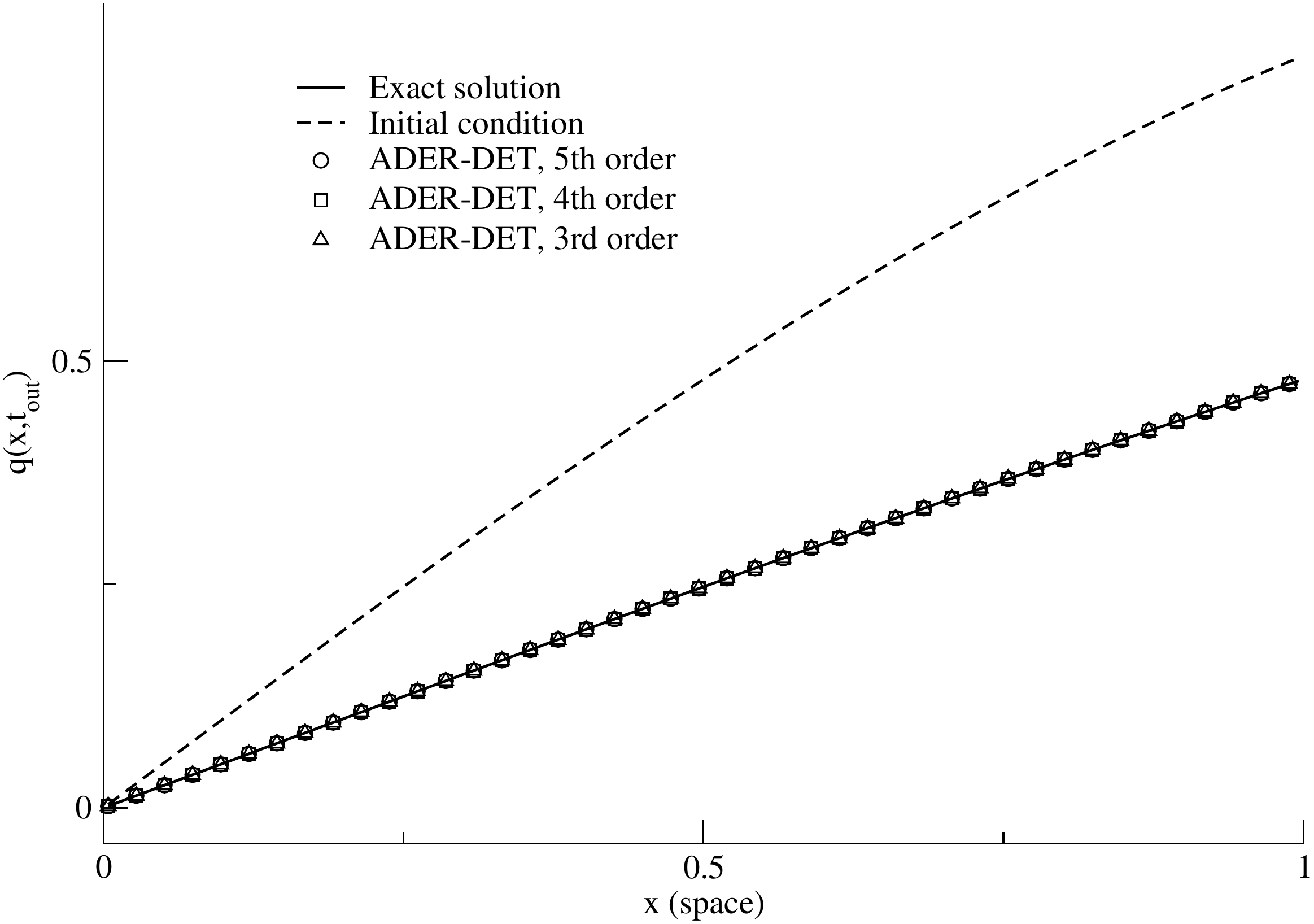}
\end{center}
\caption{Non-invertible Jacobian matrix. Initial condition (Dashed line),  exact solution (full line), third order solution (triangles),  fourht order solution (squares) and fifth order solution (circles). Parameters: $N = \bar{M} = 3$, $L = 2$, $k=1$, 128 cells,  $t_{out} = 1$ and $C_{cfl} = 0.9$}\label{fig:zero_eigenvalue}
\end{figure}
\begin{table}
\begin{center}
Theoretical order : 2 \\
\begin{tabular}{cccccccc} 
\\
\hline
\hline 
Mesh  & $L_\infty$ - err & $L_\infty$- ord  & $L_1$ - err & $L_1$ - ord & $L_2$ - err & $L_2$ - ord & CPU  \\  
\hline

     8  &  0.00  &$  2.10e-0 1$&  0.00  &$  1.12e-0 1$&  0.00  &$  1.31e-0 1$  &0.0080\\
    16  &  0.88  &$  1.13e-0 1$&  1.64  &$  3.58e-0 2$&  1.41  &$  4.94e-0 2$  &0.0200\\
    32  &  1.01  &$  5.64e-0 2$&  1.72  &$  1.09e-0 2$&  1.55  &$  1.68e-0 2$  &0.0520\\
    64  &  2.49  &$  1.00e-0 2$&  2.65  &$  1.73e-0 3$&  2.62  &$  2.73e-0 3$  &0.1560\\
   128  &  2.15  &$  2.25e-0 3$&  2.62  &$  2.80e-0 4$&  2.50  &$  4.83e-0 4$  &0.5000\\

 \hline
 \\
 \end{tabular}  
\\
Theoretical order : 3 \\
\begin{tabular}{cccccccc}  
\\
\hline 
Mesh  & $L_\infty$ - err & $L_\infty$- ord  & $L_1$ - err & $L_1$ - ord & $L_2$ - err & $L_2$ - ord & CPU  \\  
\hline

     8  &  0.00  &$  5.80e-0 1$&  0.00  &$  1.46e-0 1$&  0.00  &$  2.25e-0 1$  &0.0040\\
    16  &  3.37  &$  5.60e-0 2$&  2.83  &$  2.06e-0 2$&  3.00  &$  2.82e-0 2$  &0.0200\\
    32  &  0.90  &$  3.01e-0 2$&  1.94  &$  5.36e-0 3$&  1.71  &$  8.62e-0 3$  &0.0680\\
    64  &  2.93  &$  3.96e-0 3$&  2.84  &$  7.50e-0 4$&  2.84  &$  1.21e-0 3$  &0.1800\\
   128  &  2.84  &$  5.51e-0 4$&  2.96  &$  9.65e-0 5$&  2.94  &$  1.57e-0 4$  &0.6520\\
 
 \hline
 \\
\end{tabular} 
\\
Theoretical order : 4 \\
\begin{tabular}{cccccccc}  	
\\
\hline 
Mesh  & $L_\infty$ - err & $L_\infty$- ord  & $L_1$ - err & $L_1$ - ord & $L_2$ - err & $L_2$ - ord & CPU  \\  
\hline

     8  &  0.00  &$  1.08e-0 0$&  0.00  &$  2.58e-0 1$&  0.00  &$  4.10e-0 1$  &0.0080\\
    16  &  0.03  &$  1.06e-0 0$&  0.95  &$  1.34e-0 1$&  0.48  &$  2.94e-0 1$  &0.0320\\
    32  &  6.13  &$  1.51e-0 2$&  5.26  &$  3.48e-0 3$&  5.80  &$  5.26e-0 3$  &0.0840\\
    64  &  4.60  &$  6.22e-0 4$&  4.61  &$  1.43e-0 4$&  4.63  &$  2.13e-0 4$  &0.2880\\
   128  &  4.29  &$  3.18e-0 5$&  4.72  &$  5.40e-0 6$&  4.71  &$  8.11e-0 6$  &1.0600\\
 
 \hline
 \\
\end{tabular} 
\\
Theoretical order : 5 \\
\begin{tabular}{cccccccc}  	
\\
\hline 
Mesh  & $L_\infty$ - err & $L_\infty$- ord  & $L_1$ - err & $L_1$ - ord & $L_2$ - err & $L_2$ - ord & CPU  \\  
\hline

     8  &  0.00  &$  1.14e-0 0$&  0.00  &$  2.10e-0 1$&  0.00  &$  4.16e-0 1$  &0.0360\\
    16  &  4.63  &$  4.61e-0 2$&  4.31  &$  1.06e-0 2$&  4.74  &$  1.55e-0 2$  &0.0960\\
    32  &  4.54  &$  1.98e-0 3$&  4.35  &$  5.17e-0 4$&  4.35  &$  7.60e-0 4$  &0.2320\\
    64  &  4.61  &$  8.11e-0 5$&  4.91  &$  1.72e-0 5$&  4.83  &$  2.68e-0 5$  &0.6840\\
   128  &  5.05  &$  2.46e-0 6$&  4.94  &$  5.58e-0 7$&  4.94  &$  8.70e-0 7$  &2.2080\\
 
 \hline
\end{tabular} 
\end{center}
\caption{Convergence rates for space-dependent flux at output time
  $t_{out} = 1$ with  $C_{cfl}= 0.9$ and $k=1$ for the inverse function $\mathbf{R}(\mathbf{U})$. Reverse boundary conditions are applied with $N=3$, $\bar{M} = 3$ and $L = 2$.}\label{non-inv:tab-1}
\end{table}
Table \ref{non-inv:tab-1}, shows the result of the convergence rate assessment by using reverse problems for boundary conditions. We have used the combination $N=\bar{M}=3$ and $L = 2$, which gives $\eta = 0.5$ and so in virtue of proposition \ref{proposition:space-time:discretization}, it produces a globally stable scheme.  Notice that we use the term {\it globally stable scheme} to say that schemes for interior problems as well as that for reverse problems are stable. 

We observe that expected orders of accuracy are achieved up to fifth order of accuracy.  As we note there are infinite numbers of inverse functions for flux $\mathbf{F}$, for each $k$ $\mathbf{R}(\mathbf{U})$ is a suitable inverse function. However, we can check that by following the procedure in section \ref{section-non-invertible-F}, the starting guess $\mathbf{Q}^{0} = [q_1 , q_2]^T$ yields  the converged state $\mathbf{Q}^{*} = [q_1,1]^T$, hence the choice of  $k = 1$  in (\ref{zero_eig:eq-5}) agrees with the expected result in section \ref{section-non-invertible-F}, due to that this is set for the simulations.

\subsection{Linear system model}
In this test, the interior problem is given by the linear system of
conservation laws
\begin{eqnarray}
  \label{laesys:1}
\begin{array}{cc}
  \partial_t \mathbf{Q} +\partial_x\mathbf{F}( \mathbf{Q}) = 0 \;,& x\in [0,1]\;,\\
  \mathbf{Q}(x,0) = \mathbf{H}_0(x)\;,\\
  \mathbf{Q}(0,t) = \mathbf{G}_L(x)\;,\\
  \mathbf{Q}(1,t) = \mathbf{G}_R(x)\;,\\
\end{array}
\end{eqnarray}
with 
\begin{eqnarray}
  \label{laesys:2}
\begin{array}{cc}

\mathbf{F}(\mathbf{Q})=
\left[
\begin{array}{c}
q_1-q_2\\
2q_2
\end{array}
\right]\;,
&
\mathbf{H}_0(x)=
\left[
\begin{array}{c}
sin(2\pi x)\\
cos(2\pi x)
\end{array}
\right]\;.
\end{array}
\end{eqnarray}
The exact solution is
\begin{eqnarray}
  \label{exactsys}
  \begin{array}{c}
    \mathbf{Q}^e(x,t) =
\left[
  \begin{array}{c}
    sin(2\pi(x-t))+cos(2\pi (x-t))-2cos(2\pi(x-2t))\\
4cos(2\pi(x-2t))
  \end{array}
\right],
  \end{array}
\end{eqnarray}
so, similarly to the scalar linear case $\mathbf{G}_L(t)$ and
$\mathbf{G}_R(t)$ are given by the exact solution evaluated at $x=0$ and
$x=1$, respectively. 

In this case the reverse problems are directly obtained. Without loss of generality, the left reverse problem is given by
\begin{eqnarray}
  \label{slae_rev:1}
\left.
  \begin{array}{ccc}
    \partial_x \mathbf{U} + \partial_t(\mathbf{R}( \mathbf{U})) &=& 0\;\;,
x< x_L\;,  \\
    \mathbf{U}(t,x_L) &=& \mathbf{F}(\mathbf{G}_L(t)) \;, \\
  \end{array}
\right\}
\end{eqnarray}
where extrapolated  boundary conditions are applied on the extremes of the computational domain. Here 
\begin{eqnarray}
  \label{laesys_rev:2}
\begin{array}{cc}
\mathbf{U}=
\left[
\begin{array}{c}
u_1\\
u_2
\end{array}
\right]\;,
&
\mathbf{R}(\mathbf{U})=
\left[
\begin{array}{c}
u_1+\frac{u_2}{2}\\
\frac{u_2}{2}
\end{array}
\right]\;.
\end{array}
\end{eqnarray} 
Table \ref{LStest:tab-1} shows the error and the
result of the empirical convergence rate assessment for the interior problem with the combination $N = 50$, $\bar{M} = 20$, $L = 2$, which produces  $\eta = 0.2$, so a globally stable method should be obtained. Additionally, we use $t_{out} = 1$ and
$CFL = 0.9$. We observe that expected order of accuracy is achieved up to fifth order of accuracy. 

\begin{table}
\begin{center}
Theoretical order : 2 \\
\begin{tabular}{cccccccc} 
\\
\hline
\hline 
Mesh  & $L_\infty$ - err & $L_\infty$- ord  & $L_1$ - err & $L_1$ - ord & $L_2$ - err & $L_2$ - ord & CPU  \\  
\hline

    8  &  0.00  &$  5.73e-0 1$&  0.00  &$  2.82e-0 1$&  0.00  &$  3.36e-0 1$  &0.5360\\
    16  &  1.24  &$  2.43e-0 1$&  1.73  &$  8.48e-0 2$&  1.55  &$  1.14e-0 1$  &0.5240\\
    32  &  2.65  &$  3.87e-0 2$&  2.05  &$  2.04e-0 2$&  2.31  &$  2.31e-0 2$  &1.0120\\
    64  &  1.80  &$  1.12e-0 2$&  2.01  &$  5.06e-0 3$&  2.05  &$  5.58e-0 3$  &2.1000\\
   128  &  2.16  &$  2.49e-0 3$&  2.04  &$  1.23e-0 3$&  2.03  &$  1.36e-0 3$  &4.3240\\

 \hline
 \\
\end{tabular}  
\\
Theoretical order : 3 \\
\begin{tabular}{cccccccc} 
\\
\hline 
Mesh  & $L_\infty$ - err & $L_\infty$- ord  & $L_1$ - err & $L_1$ - ord & $L_2$ - err & $L_2$ - ord & CPU   \\  
\hline

     8  &  0.00  &$  3.46e-0 1$&  0.00  &$  1.64e-0 1$&  0.00  &$  2.00e-0 1$  &0.2600\\
    16  &  2.45  &$  6.33e-0 2$&  2.42  &$  3.07e-0 2$&  2.46  &$  3.63e-0 2$  &0.3400\\
    32  &  2.93  &$  8.28e-0 3$&  2.88  &$  4.16e-0 3$&  2.91  &$  4.81e-0 3$  &0.6920\\
    64  &  3.01  &$  1.03e-0 3$&  3.00  &$  5.21e-0 4$&  3.00  &$  6.00e-0 4$  &1.4120\\
   128  &  3.01  &$  1.28e-0 4$&  3.01  &$  6.48e-0 5$&  3.01  &$  7.45e-0 5$  &3.0720\\

 \hline
\\
\end{tabular}  
\\
Theoretical order : 4 \\
\begin{tabular}{cccccccc} 
\\
\hline 
Mesh  & $L_\infty$ - err & $L_\infty$- ord  & $L_1$ - err & $L_1$ - ord & $L_2$ - err & $L_2$ - ord & CPU  \\  
\hline

     8  &  0.00  &$  5.71e-0 1$&  0.00  &$  2.97e-0 1$&  0.00  &$  3.41e-0 1$  &0.2560\\
    16  &  4.46  &$  2.60e-0 2$&  4.40  &$  1.40e-0 2$&  4.38  &$  1.64e-0 2$  &0.5400\\
    32  &  4.72  &$  9.84e-0 4$&  4.77  &$  5.14e-0 4$&  4.81  &$  5.84e-0 4$  &1.0440\\
    64  &  3.83  &$  6.92e-0 5$&  4.18  &$  2.84e-0 5$&  4.20  &$  3.17e-0 5$  &2.1600\\
   128  &  3.82  &$  4.90e-0 6$&  3.93  &$  1.87e-0 6$&  3.91  &$  2.10e-0 6$  &4.7560\\
 
 \hline

\end{tabular}
\\
Theoretical order : 5 \\
\begin{tabular}{cccccccc} 
\\
\hline 
Mesh  & $L_\infty$ - err & $L_\infty$- ord  & $L_1$ - err & $L_1$ - ord & $L_2$ - err & $L_2$ - ord & CPU  \\  
\hline

     8  &  0.00  &$  2.33e-0 1$&  0.00  &$  1.24e-0 1$&  0.00  &$  1.44e-0 1$  &6.8160\\
    16  &  5.17  &$  6.47e-0 3$&  5.07  &$  3.69e-0 3$&  5.08  &$  4.28e-0 3$  &13.3720\\
    32  &  5.07  &$  1.92e-0 4$&  4.98  &$  1.17e-0 4$&  5.02  &$  1.32e-0 4$  &26.9999\\
    64  &  4.97  &$  6.13e-0 6$&  4.96  &$  3.75e-0 6$&  4.97  &$  4.22e-0 6$  &54.7440\\
   128  &  4.97  &$  1.96e-0 7$&  4.97  &$  1.20e-0 7$&  4.97  &$  1.34e-0 7$  &110.7640\\
    
 \hline

\end{tabular} 
\end{center}
\caption{Convergence rates for the linear system at output time
  $t_{out} = 1$ with  $C_{cfl}= 0.9\;.$ Reverse boundary conditions are applied with $N=50$, $\bar{M}=10$ and $L= 2$.}\label{LStest:tab-1}
\end{table}

\subsection{The Euler equations}
Now let us consider an interior problem in which the problem is non-linear, so we consider the Euler equations, characterized by
\begin{eqnarray}
\label{eq:1-euler}
\begin{array}{ccc}
\mathbf{Q} =
\left[
\begin{array}{c}
\rho \\
\rho u \\
E
\end{array}
\right]\;,
&
\mathbf{F}(\mathbf{Q}) =
\left[
\begin{array}{c}
\rho u \\
\rho u^2 + p \\
u(E+p)
\end{array}
\right]\;.
\end{array}
\end{eqnarray}
Here, the pressure $p$ is related with the conserved variables through
the equation for an ideal gas with $\gamma = 1.4$, which yields
\begin{eqnarray}
\label{eq:2-euler}
p = (\gamma -1) (E -\frac{\rho u^2}{2}) \;.
\end{eqnarray}
The initial condition for this system in terms of
non-conservative variables $[\rho, u,p]$,  is given by
\begin{eqnarray}
\begin{array}{c}
\rho(x,0) = 1+0.2\sin(2\pi x)\;,\\
u(x,0)= 1,\\
p(x,0)= 2,
\end{array}
\end{eqnarray}
additionally the system is endowed with Dirichlet boundary conditions in terms of
non-conservative variables, given by
\begin{eqnarray}
\begin{array}{cc}
\mathbf{W}_L(t) =
\left[
\begin{array}{c}
1-0.2\sin(2\pi t)\\
1\\
2
\end{array}
\right]  \;,
&
\mathbf{W}_R(t) =
\left[
\begin{array}{c}
1+0.2\sin(2\pi(1- t))\\
1\\
2
\end{array}
\right]  \;.
\end{array}
\end{eqnarray}
So, the exact solution is given by 
\begin{eqnarray}
\begin{array}{ccc}
\rho(x,t) &=& 1+0.2\sin(2\pi (x-t))\;,\\
u(x,t)    &=& 1\;,\\
p(x,t)    &=& 2\;.
\end{array}
\end{eqnarray}
The reverse problems in this case are not straightforward obtained. Thus in order to provide the form of the reverse problems, we define
\begin{eqnarray}
\phi(\mathbf{U}) = \frac{2u_2+\sqrt{4u_2^2+8(\gamma^2-1) 
         \biggl(\frac{u_2^2}{2}-u_1u_3\biggr)  }}{2(\gamma+1)} \;.
\end{eqnarray}
Therefore, the  flux for the reverse problems is given by
\begin{eqnarray}
\begin{array}{c}
\mathbf{R}(\mathbf{U})=
\left[
\begin{array}{c}
     \displaystyle \frac{u_1^2}{u_2-\phi(\mathbf{U})}\\
     u_1\\
    \displaystyle  \frac{u_2}{2}+\frac{\phi(\mathbf{U})(3-\gamma )}{ 2\gamma-1)}
\end{array}
\right]\;.
\end{array}
\end{eqnarray}
Table \ref{Eulertest:tab-1} shows the convergence rate  assessment for
the Euler equations. Here we have used $t_{out} = 1$ and
$C_{CFL}=0.9$. For simulations, reverse problems have been applied with $N=\bar{M}=3$ and $L=1.5$ and so $\eta = 0.66$, from proposition \ref{proposition:space-time:discretization}, the scheme should be globally stable.  We observe that the expected theoretical orders of accuracy for interior problems are achieved.
Figure \ref{eulerdensity} shows the comparison between the exact  and numerical solution of fourth order 
of accuracy.  The numerical solution has been computed with 64 cells.     
\begin{table}
\begin{center}
Theoretical order : 2\\
\begin{tabular}{cccccccc} 
\\
 \hline
\hline 
Mesh  & $L_\infty$ - err & $L_\infty$- ord  & $L_1$ - err & $L_1$ - ord & $L_2$ - err & $L_2$ - ord  & CPU \\  
\hline

    32  &  0.00  &$  7.35e-0 3$&  0.00  &$  3.81e-0 3$&  0.00  &$  4.41e-0 3$  &0.3680\\
    64  &  2.02  &$  1.81e-0 3$&  2.16  &$  8.51e-0 4$&  2.15  &$  9.93e-0 4$  &0.3880\\
   128  &  2.08  &$  4.29e-0 4$&  2.09  &$  2.00e-0 4$&  2.08  &$  2.35e-0 4$  &1.0160\\
   256  &  2.04  &$  1.04e-0 4$&  2.04  &$  4.87e-0 5$&  2.04  &$  5.72e-0 5$  &3.1960\\
   512  &  2.00  &$  2.60e-0 5$&  2.02  &$  1.20e-0 5$&  2.02  &$  1.41e-0 5$  &11.4280\\

 \hline
\\
\end{tabular}
\\ 
Theoretical order : 3 \\
\begin{tabular}{cccccccc} 
\\
\hline 
Mesh  & $L_\infty$ - err & $L_\infty$- ord  & $L_1$ - err & $L_1$ - ord & $L_2$ - err & $L_2$ - ord & CPU  \\  
\hline

    32  &  0.00  &$  1.92e-0 3$&  0.00  &$  8.99e-0 4$&  0.00  &$  1.05e-0 3$  &0.1440\\
    64  &  3.01  &$  2.38e-0 4$&  2.95  &$  1.16e-0 4$&  2.97  &$  1.35e-0 4$  &0.4400\\
   128  &  3.01  &$  2.96e-0 5$&  2.98  &$  1.48e-0 5$&  2.99  &$  1.69e-0 5$  &1.1320\\
   256  &  3.00  &$  3.70e-0 6$&  2.99  &$  1.86e-0 6$&  3.00  &$  2.12e-0 6$  &3.8480\\
   512  &  3.00  &$  4.62e-0 7$&  3.00  &$  2.33e-0 7$&  3.00  &$  2.65e-0 7$  &14.1720\\

 \hline
 \\
\end{tabular}
\\ 
Theoretical order : 4 \\
\begin{tabular}{cccccccc}
\\ 
\hline 
Mesh  & $L_\infty$ - err & $L_\infty$- ord  & $L_1$ - err & $L_1$ - ord & $L_2$ - err & $L_2$ - ord & CPU  \\  
\hline

    32  &  0.00  &$  5.00e-0 4$&  0.00  &$  1.70e-0 4$&  0.00  &$  2.14e-0 4$  &5.9920\\
    64  &  5.79  &$  9.01e-0 6$&  5.22  &$  4.56e-0 6$&  5.37  &$  5.17e-0 6$  &14.5119\\
   128  &  3.86  &$  6.21e-0 7$&  3.95  &$  2.95e-0 7$&  3.93  &$  3.40e-0 7$  &33.5880\\
   256  &  3.90  &$  4.16e-0 8$&  3.91  &$  1.96e-0 8$&  3.91  &$  2.27e-0 8$  &89.7599\\
   512  &  3.95  &$  2.70e-0 9$&  3.94  &$  1.27e-0 9$&  3.94  &$  1.48e-0 9$  &259.4400\\    
 
 \hline
 \\
\end{tabular} 
\\ 
Theoretical order : 5 \\
\begin{tabular}{cccccccc}
\\ 
\hline 
Mesh  & $L_\infty$ - err & $L_\infty$- ord  & $L_1$ - err & $L_1$ - ord & $L_2$ - err & $L_2$ - ord & CPU  \\  
\hline

    32  &  0.00  &$  3.56e-0 5$&  0.00  &$  1.92e-0 5$&  0.00  &$  2.16e-0 5$  &47.8040\\
    64  &  4.99  &$  1.12e-0 6$&  4.94  &$  6.26e-0 7$&  4.94  &$  7.02e-0 7$  &103.8680\\
   128  &  4.99  &$  3.50e-0 8$&  4.95  &$  2.02e-0 8$&  4.96  &$  2.26e-0 8$  &223.6119\\
   256  &  5.00  &$  1.10e-0 9$&  4.97  &$  6.43e-010$&  4.98  &$  7.20e-010$  &502.5120\\
   512  &  4.99  &$  3.45e-011$&  4.99  &$  2.03e-011$&  4.99  &$  2.27e-011$  &1154.5119\\
 
 \hline

\end{tabular} 

\end{center}
\caption{Convergence rates for the Euler equations at output time
  $t_{out} = 1.$ with  $C_{cfl}= 0.9\;.$  Reverse boundary conditions are applied with $N=\bar{M}=3$ and $L=1.5$.}\label{Eulertest:tab-1}
\end{table}
Table \ref{Eulertest:cpu:tab-1} shows the CPU-time comparison, between the reverse problems and the inverse Lax-Wendroff procedure. We note that the CPU time has the same order of accuracy for both procedures.  The Lax-Wendroff procedure, used in this test is described in \ref{section:inverse-LW}. 
\begin{table}
\begin{center}
Theoretical order : 2\\
\begin{tabular}{ccc} 
\\
 \hline
\hline 
Mesh  & CPU reverse problems & CPU inverse Lax-Wendroff \\ 
\hline

    32  &  0.3680  & 0.2720 \\
    64  &  0.3880  & 0.17199 \\
   128  &  1.0160  & 0.6599 \\
   256  &  3.1960  & 2.6239 \\
   512  &  11.4280 & 9.8200 \\

 \hline
\\
\end{tabular}
\\
Theoretical order : 3\\
\begin{tabular}{ccc} 
\\
 \hline
\hline 
Mesh  & CPU reverse problems & CPU inverse Lax-Wendroff \\ 
\hline

    32  &  0.1440  & 0.2440 \\
    64  &  0.4400  & 0.3079 \\
   128  &  1.1320  & 1.0999 \\
   256  &  3.8480  & 3.9199 \\
   512  &  14.1720 & 14.5480 \\

 \hline
\\
\end{tabular}
\\
Theoretical order : 4\\
\begin{tabular}{ccc} 
\\
 \hline
\hline 
Mesh  & CPU reverse problems & CPU inverse Lax-Wendroff \\ 
\hline

    32  &  5.9920    & 5.4359 \\
    64  &  14.5119   & 13.8479 \\
   128  &  33.5880   & 29.1920 \\
   256  &  89.7599   & 78.0840 \\
   512  &  259.4400  & 255.1160  \\    

 \hline
 \\
\end{tabular}
\\
Theoretical order : 5\\
\begin{tabular}{ccc} 
\\
 \hline
\hline 
Mesh  & CPU reverse problems & CPU inverse Lax-Wendroff \\ 
\hline

    32  &  47.8040   & 48.1220 \\
    64  &  103.8680  & 97.5440 \\
   128  &  223.6119  & 215.7440 \\
   256  &  502.5120  & 476.1519 \\
   512  &  1154.5119 & 1086.2359 \\

 \hline
\\
\end{tabular}

\end{center}
\caption{CPU for the Euler equations at output time
  $t_{out} = 1.$ with  $C_{cfl}= 0.9\;.$  Reverse boundary conditions with $N=\bar{M}=3$ and $L=1.5$ (second colums) and inverse Lax-Wendroff procedure for boundaries (third columns). }\label{Eulertest:cpu:tab-1}
\end{table}
\begin{figure}
\includegraphics[scale=0.5]{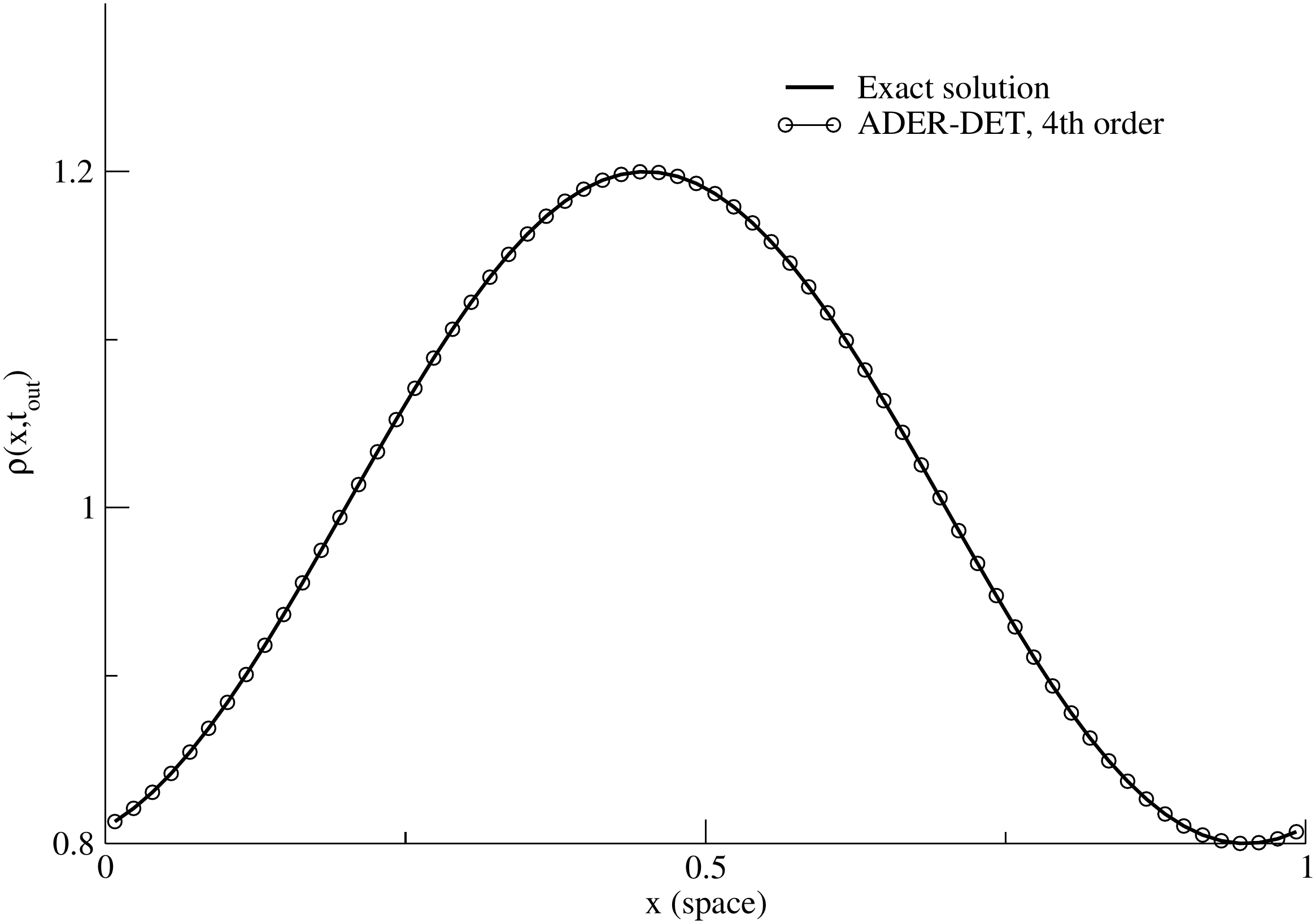}
\caption{Density for the Euler equation. Comparison between exact and numerical solutions with 64 cells. Reverse boundary conditions have been applied. Parameters are $C_{CFL}=0.9$  and output time $t_{out} = 0.2$.}\label{eulerdensity}
\end{figure}

\subsection{The Blast wave interaction problem}
Here the one-dimensional Euler equations for an ideal gas with $\gamma = 1.4$ in the domain $[0, 1]$ is solved. The initial condition, in terms of non-conservative variables is given by
\begin{eqnarray}
\begin{array}{c}
\mathbf{W}(x,0) =
\left\{
\begin{array}{cc}
\mathbf{W}_L \;,& x<0.1\;,\\
\mathbf{W}_C \;,& 0.1<x<0.9\;,\\
\mathbf{W}_R \;,& 0.9<x\;,\\
\end{array}
\right.
\end{array}
\end{eqnarray}
with 
\begin{eqnarray}
\begin{array}{c}

\mathbf{W}_L=
\left[
\begin{array}{c}
1\\
0 \\
1000
\end{array}
\right]
\;,
\mathbf{W}_C=
\left[
\begin{array}{c}
1\\
0 \\
0.01
\end{array}
\right]\;,
\mathbf{W}_R=
\left[
\begin{array}{c}
1\\
0 \\
100
\end{array}
\right]\;.
\end{array}
\end{eqnarray}
See \cite{Woodward:1984a} for further details.  The purpose of this test is the assessment of the present strategy for boundary conditions in the case of solid wall boundary conditions.  In this test multiple wave-boundary interaction occurs.    As it has become conventional from \cite{Woodward:1984a}, we solve the problem up to $t_{out}=0.038$ with $800$ cells and reverse boundary conditions. Here we construct non-conservative prescribed functions through interpolations like (\ref{interp:inverseproblems:eq-1}). So two functions of time are available 
\begin{eqnarray}
\begin{array}{c}

\mathbf{\tilde{W}}_L(t) = \mathbf{W}_1^{n-1} + \frac{(t-t^n)}{\Delta t}(\mathbf{W}_1^n-\mathbf{W}_1^{n-1})

% \left[
%\begin{array}{c}
%1 \\
%-u_1^n \\
%1000 
%\end{array}
%\right]\;,
\;, \\
\mathbf{\tilde{W}}_R(t) = \mathbf{W}_{N_{int}}^{n-1} + \frac{(t-t^n)}{\Delta t}(\mathbf{W}_{N_{int}}^n-\mathbf{W}_{N_{int}}^{n-1}) \;,

%\left[
%\begin{array}{c}
%1 \\
%-u_{N_{int}}^n \\
%100
%\end{array}
%\right]\;,
\end{array}
\end{eqnarray}
where $\mathbf{W}_{1}^{n}$ and $\mathbf{W}_{1}^{n-1}$ stand by the non-conservative variables associated to the first cell at both time  steps $t^{n-1}$ and $t^{n}$, respectively.  Similarly, $ \mathbf{W}_{N_{int}}^{n-1} $ and $ \mathbf{W}_{N_{int}}^{n} $ corresponds to the  non-conservative variables associated to the last cell at both times $t^{n-1}$ and $t^{n}$. Additionally, we impose the condition of solid wall $ (\mathbf{\tilde{W}}_L(t))_2 = -(\mathbf{\tilde{W}}_L(t))_2$ and $ (\mathbf{\tilde{W}}_R(t))_2 = -(\mathbf{\tilde{W}}_R(t))_2$. Additionally, we have used the combination $N=\bar{M}=3$ and $L=1.5$ then $\eta = 0.66$, so in virtue of proposition \ref{proposition:space-time:discretization}, it ensures the scheme to be globally stable.  A reference solution is obtained by using the ADER-HEOC solver of third order of accuracy with $3000$ cells, see \cite{Castro:2008a} for further details concerning this solver. Figure \ref{fig:Blast}, shows the result for second and third orders of accuracy, we observe that reverse problems provides very good approximations with respect to the reference solution.

\begin{figure}
\begin{center}
\includegraphics[scale=0.6]{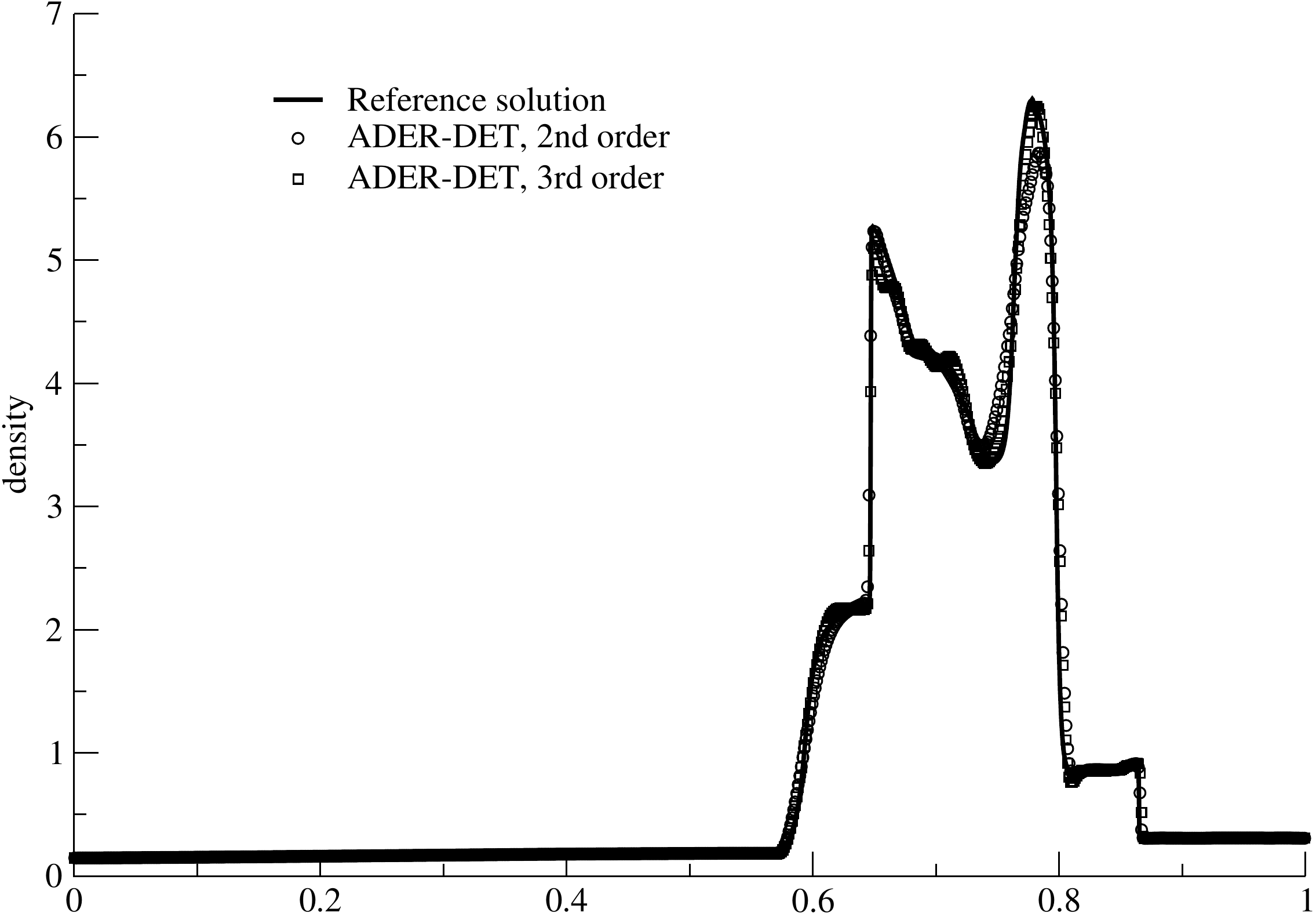}
\end{center}
\caption{Blast wave interaction. Numerical solution of second and third order of accuracy. Output time $t_{out}=0.038$ using $800$ cells.}\label{fig:Blast}
\end{figure}

\section{Conclusions}\label{summary}
In this work a strategy to implement Dirichlet boundary conditions
in the context of ADER schemes for hyperbolic conservation laws has been proposed. 
The strategy  requires the acknowledgment of the information at boundary in terms of a state vector, when information at boundary  is   not prescribed, or partially provided, we proposed a strategy to approximate that information.  The use of this information is twofold; first, to build a Riemann problems at extremes of the computational domain and so, as usual, the wave propagation at boundaries is provided through the solution of these problems.  Second, to compute ghost cells, they are used to get the stencils for cells near to boundaries which is needed in the reconstruction procedure, a key ingredient to build ADER schemes.  Ghost cells are computed from auxiliary problems called reverse problems.

Reverse problems are build from the governing equation of the conservation law, called here  interior problem, and from information at boundaries. So in this sense the ghost cells contains a more physical meaning than other approaches like extrapolations.  Notice that the present approach can be seen as a numerical version of the inverse Lax-Wendroff procedure, but Taylor expansion and the Chauchy-Kowalewski procedure are not required.  In turn, reverse problems are solved numerically by using a conventional second order scheme for hyperbolic balance laws on a suitable mesh, which is built to reproduce the accuracy of the scheme for interior problems.

We have proved analytically that for the scalar case the present strategy is able to reconcile the stability and  accuracy of the method for the interior problem. Furthermore, we have obtained a criterion to select the suitable mesh for the reverse problems in order to obtain a numerical schemes which is stable for both the reverse and interior problems. The numerical method to solve the interior problems was the ADER-DET. However, the present boundary treatment has been designed to be implemented in any high order scheme of the family of ADER methods.
  
 We have dealt the issues of partial acknowledgment of information at boundary as in the case of no prescribed outflow boundaries and the case in which interior problems have no an invertible  flux.  We have solved; linear advection  equation; linear advection system,  a hyperbolic problem with a non-invertible Jacobian matrix and the Euler equations.  Empirical convergence rate assessments and CPU time comparison with the inverse Lax-Wendroff approach, were carried out for some of them. We have obtained accuracy in space and time up to fifth oder of accuracy and the scheme has resulted to be comparable with the well known inverse Lax-Wendroff procedure.
Thus the present boundary condition treatment in the context of high-order ADER numerical schemes is a simple and feasible strategy in terms of efficiency and accuracy. Extension to high-dimension and for complex geometries will be the issues of a future work. 

\section*{Acknowledgements}

The author thanks FONDECYT in the frame of the research project
FONDECYT Postdoctorado 2016, number 3160743.

\section*{References}

\bibliographystyle{plain}
\bibliography{ref} % for submission
%\bibliography{/home/gino/Dropbox/Documents/relaxation/ref} % For linux

\appendix

\section{A simple strategy to carry out the inverse Lax-Wendroff procedure}\label{section:inverse-LW}
Conventional Lax-Wendroff or Cauchy-Kowalewky procedure consists of providing time-derivatives in terms of spatial-derivatives.  This is carried out by a repetitively differentiation of of the governing equations.  In opposite, inverse Lax-Wendroff procedure provides space-derivatives in terms of time-derivatives.  In general the Cauchy-Kowalewsky procedure can be cumbersome for complex conservations laws and symbolic software manipulators may be needed.  Due to the fact that Cauchy-Kowalewsky procedure is commonly available we use it to construct a strategy for the inverse Lax-Wendroff procedure.

To set the problem in the context of applications to boundary conditions and without loss of generality, let us consider an inlet boundary condition. Then ghost cells can be approximated by using a Taylor series expansion in space as follows
\begin{eqnarray}
\begin{array}{c}
\mathbf{Q}(x,t) =\mathbf{Q}(x_L,t) +\displaystyle \sum_{k=1}^{M}\frac{ (x-x_L)^k }{k!} \partial_x^{(k)} \mathbf{Q}(x_L,t) \;.
\end{array}
\end{eqnarray}
Let us assume, as usual, that  time derivatives are obtained in terms of spatial derivatives through the Cauchy-Kowalewsky functionals 
\begin{eqnarray}
\begin{array}{c}
\partial_t^{(k)} \mathbf{Q}(x,t) = \mathbf{G}^{k}(\mathbf{Q}(x,t),...,\partial_x^{(k)}\mathbf{Q}(x,t)) \;.
\end{array}
\end{eqnarray}
Then, given  $ \partial_t^{(l)} \mathbf{Q}(x_L,t) = \mathbf{G}_L^{(l)}(t)$, $l=1,...,M$,  we look for values $\partial_x^{(j)}\mathbf{Q}(x_L,t)$ such that 
\begin{eqnarray}
\begin{array}{c}
\mathbf{G}_L^{(l)}(t) - \mathbf{G}^{k}(\mathbf{Q}(x_L,t),...,\partial_x^{(k)}\mathbf{Q}(x_L,t)) = \mathbf{0} \;,
\end{array}
\end{eqnarray}
for all $k=1,...,M.$ Thus an algebraic equation is then constructed.

\end{document}